\newtheorem{thm}{Theorem}[section]
\newtheorem{prop}[thm]{Proposition}
\newtheorem{lem}[thm]{Lemma}
\newtheorem{claim}[thm]{Claim}
\theoremstyle{definition}
\newtheorem{strat}[thm]{Strategy}
\newtheorem{quest}[thm]{Question}
\newtheorem{conj}[thm]{Conjecture}\theoremstyle{remark}
\setlist[enumerate]{itemsep=2ex, topsep=2ex} 
\setlist[itemize]{itemsep=2ex, topsep=2ex}
\renewcommand{\c}[1]{\mathcal{#1}}
\renewcommand{\r}{\right}
\newcommand{\half}{\frac{1}{2}}
\newcommand{\quart}{\frac{1}{4}}
\newcommand{\ol}[1]{\overline{#1}}
\newcommand{\rec}[1]{\frac{1}{#1}}
\newcommand{\f}[2]{\frac{#1}{#2}}
\newcommand{\gam}{\gamma}
\newcommand{\sm}{\setminus}
\newcommand{\sub}{\subseteq}
\newcommand{\floor}[1]{\lfloor #1\rfloor}
\DeclareMathOperator*{\ex}{ex}
\newcommand{\sat}{\mathrm{sat}}
\title{Saturation Games for Odd Cycles}
\author{Sam Spiro}
\date{\today}
\begin{document}

\begin{abstract}
	Given a family of graphs $\mathcal{F}$, we consider the $\mathcal{F}$-saturation game.  In this game, two players alternate adding edges to an initially empty graph on $n$ vertices, with the only constraint being that neither player can add an edge that creates a subgraph that lies in $\mathcal{F}$.  The game ends when no more edges can be added to the graph.  One of the players wishes to end the game as quickly as possible, while the other wishes to prolong the game.  We let $\textrm{sat}_g(\mathcal{F};n)$ denote the number of edges that are in the final graph when both players play optimally.
	
	The $\{C_3\}$-saturation game was the first saturation game to be considered, but as of now the order of magnitude of $\textrm{sat}_g(\{C_3\},n)$ remains unknown.  We consider a variation of this game.  Let $\mathcal{C}_{2k+1}:=\{C_3,\ C_5,\ldots,C_{2k+1}\}$. We prove that $\textrm{sat}_g(\mathcal{C}_{2k+1};n)\ge(\frac{1}{4}-\epsilon_k)n^2+o(n^2)$ for all $k\ge 2$ and that $\textrm{sat}_g(\mathcal{C}_{2k+1};n)\le (\frac{1}{4}-\epsilon'_k)n^2+o(n^2)$ for all $k\ge 4$, with $\epsilon_k<\frac{1}{4}$ and $\epsilon'_k>0$ constants tending to 0 as $k\to \infty$.  In addition to this we prove $\textrm{sat}_g(\{C_{2k+1}\};n)\le \frac{4}{27}n^2+o(n^2)$ for all $k\ge 2$, and $\textrm{sat}_g(\mathcal{C}_\infty\setminus C_3;n)\le 2n-2$, where $\mathcal{C}_\infty$ denotes the set of all odd cycles.
\end{abstract}
	\maketitle
\section{Introduction}
Hajnal proposed the following game.  Initially $G$ is an empty graph on $n$ vertices.  Two players alternate turns adding edges to $G$, with the only restriction being that neither player is allowed to add an edge that would create a triangle.  The last player to add an edge wins the game, and the central question is which player wins this game as a function of $n$.

The answer to this problem is known only for small values of $n$, the most recent result being $n=16$ by Gordinowicz and Pra\l{}at \cite{pralat}.  A variation of this game was considered by F\"{u}redi, Reimer, and Seress \cite{furedi}.  In the modified version of the game, there are two players, Max and Mini, who alternate turns adding edges to an initially empty graph on $n$ vertices with the same rules as in Hajnal's original triangle-free game.  The main difference is that once no more edges can be added to $G$, Max receives a point for every edge in the graph and Mini loses a point for every edge in the graph, with both players trying maximize the number of points they receive at the end.  The question is now to figure out how many edges are at the end of the game when both players play optimally.

This game can be generalized.  For a family of graphs $\c{F}$, we say that a graph $G$ is $\c{F}$-saturated if $G$ contains no graph of $\c{F}$ as a subgraph, but adding any edge to $G$ would create a subgraph of $\c{F}$.  The $\c{F}$-saturation game consists of two players, Max and Mini, who alternate turns adding edges to an initially empty graph $G$ on $n$ vertices, with the only restriction being that $G$ is never allowed to contain a subgraph that lies in $\c{F}$.  The game ends when $G$ is $\c{F}$-saturated.  The payoff for Max is the number of edges in $G$ when the game ends, and Mini's payoff is the opposite of this.  We let $\sat_g(\c{F};n)$ denote the number of edges that the graph in the $\c{F}$-saturation game ends with when both players play optimally, and we call this quantity the game $\c{F}$-saturation number.  

We note that this game, and hence the value of $\sat_g(\c{F};n)$, depends on whether Max or Mini makes the first move of the game, and in general this choice can significantly affect the value of $\sat_g(\c{F};n)$, as is illustrated in \cite{hefetz}.  For simplicity we will only consider the game where Max makes the first move, though we claim that all of our results continue to hold when Mini makes the first move by making small adjustments to our current proofs.

Let $C_k$ denote the cycle of length $k$.  The $\{C_3\}$-saturation game was the original saturation game studied in \cite{furedi}, where they proved what is still the best known lower bound of  $\half n\log n+o(n\log n)$ for $\sat_g(\{C_3\};n)$.  Erd\H{o}s claimed to have proved an upper bound of $n^2/5$ for this game, but this proof has been lost. Recently, Bir{\'o}, Horn, and Wildstrom \cite{horn} published the first non-trivial asymptotic upper bound of $\f{26}{121}n^2+o(n^2)$ for $\sat_g(\{C_3\};n)$.  A number of other results have been obtained for specific choices of $\c{F}$, see for example \cite{westSurv}, \cite{westMatch}, and \cite{lee1}.  In addition to this, saturation games have recently been generalized to directed graphs \cite{lee2}, hypergraphs \cite{patkos}, and to avoiding more general graph properties such as $k$-colorability in \cite{hefetz} and \cite{keusch}.

\subsection{Main Results}\leavevmode
 
Let $\c{C}_{2k+1}:=\{C_3,C_5,\ldots,C_{2k+1}\}$, and let $\c{C}_\infty$ denote the set of all odd cycles.  Most of this paper will be focused on studying the $\c{C}_{2k+1}$-saturation games for $k\ge 2$.  The key idea with these games is that by forbidding either player from making $C_{5}$'s, both players can utilize a strategy that keeps the graph essentially bipartite throughout the game.  This makes it significantly easier to analyze the correctness of our proposed strategies, and to bound the number of edges that are in the final graph. Our main result is the following upper and lower bound for $\sat_g(\c{C}_{2k+1};n)$ and most values of $k$.

\begin{thm}\label{T-gen}
	For $k\ge 4$, 
	\[
		\left(\quart-\f{1}{4k^2}\right)n^2+o(n^2) \le \sat_g(\c{C}_{2k+1};n)\le \left(\quart -\rec{ 20^6k^4}\r)n^2+o(n^2).
	\]
\end{thm}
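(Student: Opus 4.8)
The plan is to establish the two bounds separately, since they require genuinely different ideas. For the lower bound $\sat_g(\c{C}_{2k+1};n)\ge(\quart-\tfrac{1}{5k^2})n^2+o(n^2)$, I would describe an explicit strategy for Max that forces the final graph to be dense. Since neither player may create an odd cycle of length $\le 2k+1$, a natural framework is to have Max maintain a near-balanced bipartition $(A,B)$ of the vertex set and, on each of his turns, add an edge across the cut $A\times B$ whenever one is available. The key structural fact to prove is that a graph which is $\c{C}_{2k+1}$-saturated and ``mostly bipartite'' must be close to a complete bipartite graph: if many cross pairs $\{a,b\}$ with $a\in A$, $b\in B$ were non-edges, one could add such an edge without creating a short odd cycle (a new odd cycle through $ab$ would have to use an odd-length path from $a$ to $b$ inside the current graph, which forces either a short odd cycle already present or a long cycle), contradicting saturation. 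The loss term $\tfrac{1}{5k^2}n^2$ should come from the edges Max cannot control — Mini's moves and the ``defects'' from vertices that end up with few cross-neighbors — and I expect the bookkeeping to track how many vertices can have cross-degree bounded away from $n/2$; roughly $O(n/k)$ such vertices, each costing $O(n)$ edges, plus a $k$-dependent slack from the parity/path-length analysis.

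For the upper bound $\sat_g(\c{C}_{2k+1};n)\le(\quart-\tfrac{1}{20^6k^4})n^2+o(n^2)$, I would give a strategy for Mini that terminates the game with a graph of edge density bounded away from $\quart$. The idea is for Mini to deliberately destroy potential cross-edges by building a sparse ``obstruction gadget'': Mini reserves a small set $S$ of $\Theta(n/k^2)$ vertices and quickly connects them up into a structure (for instance, a long even path or a carefully chosen bipartite-like subgraph on $S$ together with a few edges to the rest) whose presence means that adding any edge between certain large blocks would close an odd cycle of length $\le 2k+1$. Once such a gadget is in place, a constant fraction of the cross pairs become permanently forbidden, so the saturated graph is missing $\Theta(n^2/k^4)$ edges regardless of what Max does. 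The constant $20^6$ is clearly an artifact of optimizing gadget sizes and the number of rounds Mini needs to entrench the gadget before Max can fill in too much; I would not chase it, but would set up parameters symbolically and verify feasibility.

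The main obstacle, in both directions, is controlling odd cycles of all lengths $3,5,\ldots,2k+1$ simultaneously while the graph is only *approximately* bipartite. In a genuinely bipartite graph there are no odd cycles at all, but each player's opponent will insert edges creating odd ``flaws''; one must argue that these flaws stay localized (bounded in number, or confined to a small vertex set) so that they can only block a controlled number of future cross-edges and cannot be exploited by the opponent to create short odd cycles prematurely. Concretely, for the lower bound the danger is that Mini plants edges inside $A$ or inside $B$ that, combined with short paths, make many cross pairs illegal for Max; the argument must bound the ``reach'' of each such planted edge in terms of $k$. For the upper bound, dually, the danger is that Max can always route around Mini's gadget, so one must prove the gadget genuinely obstructs a linear fraction of both sides. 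I would handle this with a potential-function / defect-counting argument: assign to each vertex a bounded ``odd-defect'' measuring how far it is from fitting cleanly into one side, show each move changes total defect by $O(1)$, and translate a small total defect into the claimed edge count via the near-complete-bipartite structure lemma sketched above. Establishing that structure lemma with the right $k$-dependence is the crux, and it is essentially a short-odd-cycle Turán/stability statement that I would prove directly by a BFS-layering argument from the endpoints of a hypothetical missing cross-edge.
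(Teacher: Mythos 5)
Your framing of the lower bound points in the right direction --- Max does keep the evolving graph bipartite, and the forbidden short odd cycles are exactly what guarantees (via a short distance argument from two vertices on the same side back to the roots $u,v$) that the two sides stay independent, hence that the saturated graph is complete bipartite between them. But ``add a cross edge whenever one is available'' is not a strategy that achieves the bound: Mini's real resource is the choice of where to attach new isolated vertices, and against a Max who only fills in cross edges she can drive the bipartition toward a $1:2$ ratio, costing on the order of $n^2/36$ --- far more than $n^2/(5k^2)$. The content of the paper's lower bound is the balance-maintenance mechanism: potentials $b_U,b_V$ tracking the imbalance, a case analysis of Mini's move types (internal, outside, add-to-$U$, add-to-$V$), and --- this is where the $k$-dependence actually comes from --- a ``representative'' device showing that the vertices Max must spend moves repairing (those at distance about $k$ from the roots) number at most $O(|U|/k)$, because each is charged to a disjoint path of length $\Theta(k)$. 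Your defect-counting sketch gestures at a potential argument but supplies neither the mechanism nor the source of the $1/k$ factor.

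The upper bound has a more serious gap. A gadget that ``permanently forbids a constant fraction of cross pairs regardless of what Max does'' cannot exist in the form you describe: the lower-bound analysis itself shows Max has a strategy keeping the graph bipartite forever, so Mini can never be guaranteed to close any odd cycle, and in a bipartite end-graph the only route below $n^2/4$ is imbalance of the parts. The paper resolves this with a dichotomy that is absent from your proposal. Mini spends the early game growing $\Theta(n/k^2)$ disjoint paths of length $k$ hanging off a fixed vertex $v$ while simultaneously monitoring the balance of an approximate bipartition, through a sequence of Phases. Either she completes the paths and pairs up their endpoints into $\Theta(n/k^2)$ copies of $C_{2k+1}$ through $v$ --- and then a separate lemma shows every vertex has at most two neighbors on each such cycle, killing $\Theta(n^2/k^2)$ potential edges --- or at some point the bipartition becomes unbalanced by $\Theta(n/k^2)$, and the near-complete-bipartite structure then caps the count at $\frac{1}{4}n^2-\Theta(n^2/k^4)$. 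Both branches are essential; the assertion that the gadget obstructs Max unconditionally is precisely the step that would fail.
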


We can also obtain a quadratic lower bound for smaller values of $k$, and more generally for any collection of non-bipartite graphs which contains $C_3$ and $C_5$.

\begin{thm}\label{T-5}
	If $\c{C}$ is a set of graphs which are not bipartite with $C_3,C_5\in \c{C}$,  then \[\sat_g(\c{C};n)\ge \f{6}{25}n^2+o(n^2).\]
\end{thm}

We emphasize that these results do not imply a quadratic lower bound for the triangle-free game.  We consider two more saturation games.  The first is the game where only one odd cycle is forbidden.
\begin{thm}\label{T-OneCyc}
	For $k\ge 2$, \[\sat_g(\{C_{2k+1}\};n)\le \rec{12}\left(1+\rec{\ell}\right)^2n^2+o(n^2),\]
	where $\ell=\max(3,\floor{\sqrt{2k}})$.  In particular, $\sat_g(\{C_{2k+1}\};n)\le \f{4}{27}n^2+o(n^2)$ for all $k\ge 2$.
\end{thm}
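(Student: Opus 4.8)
The plan is to describe a strategy for Mini that forces the game to terminate after at most $\f{1}{12}(1+\rec{\ell})^2n^2+o(n^2)$ edges have been played, where $\ell=\max(3,\floor{\sqrt{2k}})$. Since the game ends exactly when the current graph becomes $\{C_{2k+1}\}$-saturated, Mini's plan is to steer the graph, as quickly as she can, into a relatively sparse $C_{2k+1}$-saturated configuration. The difficulty is that this configuration must be \emph{robust}: Max will try to flood the board with edges, and a $C_{2k+1}$-free graph can have as many as $\floor{n^2/4}$ edges (e.g.\ $K_{\ceil{n/2},\floor{n/2}}$), so a naive target is useless. Accordingly the proof has two ingredients — a carefully chosen target graph $G^\star$ with $\f{1}{12}(1+\rec{\ell})^2n^2+o(n^2)$ edges, and an argument that Mini can keep making progress toward $G^\star$ whatever Max does.

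To build $G^\star$, first fix a partition of the vertex set that reserves a $(1+\rec{\ell})$-factor of ``slack'' and isolates a small number $\ell\approx\sqrt{2k}$ of small ``ladder'' blocks. The graph $G^\star$ will consist of a dense, essentially bipartite core on the bulk of the vertices, glued to a lower-order auxiliary structure living on the ladder blocks; the two properties to verify are (i) $G^\star$ contains no $C_{2k+1}$, and (ii) $G^\star$ is saturated. For (i): since the core is essentially bipartite, any odd cycle of $G^\star$ must thread through the auxiliary part and wind through the ladder blocks, and the choice $\ell\approx\sqrt{2k}$ is dictated precisely by the requirement that the shortest such odd closed walk have length exceeding $2k+1$ — if one makes the successive ladder segments have lengths $1,3,5,\dots$ this forces a length of order $\ell^2$, and $\ell=\floor{\sqrt{2k}}$ is then the largest value keeping this above $2k+1$, so that the slack factor $(1+\rec{\ell})$ is as small as possible. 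For (ii): because every block has far more than $k$ vertices, between any two non-adjacent vertices of $G^\star$ one can find a path of length \emph{exactly} $2k$, so adding the missing edge closes a copy of $C_{2k+1}$; checking this for all the relevant pairs — inside a block, across blocks, and involving ladder vertices — is a somewhat delicate accounting of which path lengths and parities are available, matched against the target length $2k$.

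Mini's strategy is then to spend each of her turns adding a not-yet-present core edge of $G^\star$. Max's moves split into three types: edges already belonging to $G^\star$, which only help Mini; edges outside $G^\star$ that Mini can absorb by relabelling which vertices lie in which block, which she does — the reserved $(1+\rec{\ell})$ slack being exactly what lets her swallow the $O(\rec{\ell})$-fraction of ``misplaced'' vertices Max can manufacture over the course of the game; and, once enough of $G^\star$ is present, edges that are already illegal for Max because they would complete a $C_{2k+1}$. In every case the game terminates once $G^\star$ together with whatever edges Max contributed has been reached, so the number of edges in the final graph is $|E(G^\star)|+o(n^2)\le\f{1}{12}(1+\rec{\ell})^2n^2+o(n^2)$.

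The main obstacle is the tension between property (ii) and the robustness of the strategy: $G^\star$ must be sparse enough to beat $\f{1}{12}(1+\rec{\ell})^2n^2$ yet rigid enough that Mini can always answer Max (by relabelling, or by Max being outright blocked) so that the game never drifts into a denser $C_{2k+1}$-free configuration. Making the relabelling analysis go through while keeping $G^\star$ simultaneously $C_{2k+1}$-free, saturated, and of the stated size is where the real work lies, and it is what pins down both the constant $\f{1}{12}$ and the value $\ell=\floor{\sqrt{2k}}$. Finally, the ``in particular'' statement follows by monotonicity in $\ell$: for $2\le k\le 7$ we have $\ell=3$, so the bound is exactly $\f{1}{12}(\f{4}{3})^2n^2=\f{4}{27}n^2$, while for $k\ge 8$ we have $\ell\ge 4$, whence $(1+\rec{\ell})^2\le(\f{5}{4})^2<(\f{4}{3})^2$ and the bound is strictly smaller.
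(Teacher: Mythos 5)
Your proposal does not contain a proof; it is an outline of a different strategy whose central difficulty you name but do not resolve. There are two concrete gaps. First, the target graph $G^\star$ is never constructed: you assert the existence of a $C_{2k+1}$-free, saturated graph with $\f{1}{12}(1+\rec{\ell})^2n^2+o(n^2)$ edges built from a ``bipartite core'' glued to ``ladder blocks,'' but you verify neither $C_{2k+1}$-freeness nor saturation, and the heuristic you give for why $\ell=\floor{\sqrt{2k}}$ is the right parameter (odd closed walks of length $\approx\ell^2$ threading the ladders) is not backed by any construction and is not where this value actually comes from. Second, and more fundamentally, the strategy of ``steering toward $G^\star$ and absorbing Max's deviations by relabelling'' cannot work as stated, because edges are permanent: if Max places an edge inside one side of the intended bipartition of your core, no relabelling of vertices removes it, and since a $C_{2k+1}$-free graph can have $\floor{n^2/4}$ edges, nothing you describe prevents Max from steadily assembling a balanced complete bipartite graph on the bulk of the vertices. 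You acknowledge that making the relabelling analysis go through ``is where the real work lies'' --- but that work is the entire proof, and it is absent.

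The paper's actual argument avoids trying to control the global structure of the graph. In a constant number of opening moves --- legal simply because a graph with at most $2\binom{\ell}{2}\le 2k$ edges cannot contain a $C_{2k+1}$, which is what forces $\ell\le\sqrt{2k}$ --- Mini builds a clique on $u_1,\dots,u_\ell$ together with $\ell$ disjoint pendant paths of length $k-2$ ending at $v_1,\dots,v_\ell$. This fixed gadget makes certain edges forever illegal: an edge between $N^\infty(v_j)$ and $N^\infty(v_{j'})$ for $j\ne j'$, or a vertex with neighbors in two distinct $N^\infty(v_j)$'s, would close a $C_{2k+1}$ through the gadget. Mini then spends the rest of the game attaching isolated vertices to whichever $v_j$ has minimum degree, forcing $|N^\infty(v_j)|\ge \f{n}{2\ell}+O(1)$ for every $j$; since these neighborhoods are essentially disjoint, each has size at most $\half\left(1+\rec{\ell}\right)n-|W|+O(1)$, where $W$ is the remainder of the vertex set, and a short optimization over $|W|$ (using Mantel's theorem on $W$ and the Erd\H{o}s--Gallai theorem inside each $N^\infty(v_j)$) yields the stated bound. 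Your derivation of the ``in particular'' clause from the main bound is correct, but it is the only part of the argument that is.
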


We also consider the ``complement'' of the $\{C_3\}$-saturation game where every odd cycle except $C_3$ is forbidden.  It turns out that in this setting the game saturation number is linear.
\begin{thm}\label{T-Mod3}
	\[ \sat_g(\c{C}_\infty\setminus \{C_3\};n)\le 2n-2.\]
\end{thm}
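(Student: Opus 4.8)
The plan is to prove the two bounds by entirely independent arguments: the upper bound comes from a strategy for Mini, while the lower bound is a purely structural fact (no strategy for Max is needed), namely that every $(\c{C}_\infty\sm\{C_3\})$-saturated graph on $n$ vertices already has at least $\tfrac54 n-2$ edges, which suffices because the game always terminates at such a graph. The basic tool in both directions is the observation that, in a graph $G$, a move $uv$ is illegal precisely when $G$ contains a $u$--$v$ path of even length at least $4$ (equivalently, adding $uv$ would create an odd cycle of length $\ge 5$); in particular every saturated graph is connected, since an edge between two components creates no cycle and so is always legal.

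For the upper bound, Mini fixes a vertex $v_1$ and, on each of her turns, plays an edge from $v_1$ to a non-neighbour of $v_1$ whenever some such move is legal (choosing among these the one whose endpoint Max is closest to ``cutting off'' from $v_1$), and otherwise plays a clean-up move. The point of trying to make $v_1$ universal is the following structural claim: if $v_1$ is adjacent to all other vertices, then $G-v_1$ can contain no path of length $3$, no cycle of length $\ge 4$, and no two triangles sharing a vertex or an edge, since each of these, together with suitable edges at $v_1$, would contain a $C_5$ and hence could never have been created legally. Thus every component of $G-v_1$ is a star, a single edge, a single vertex, or a triangle, so $|E(G-v_1)|\le |V(G-v_1)|=n-1$ and therefore $|E(G)|=(n-1)+|E(G-v_1)|\le 2n-2$. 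It then remains to show Mini can force $v_1$ to become universal: whenever $v_1$ is not yet universal Mini can make progress, and Max's attempts to protect a vertex $u$ (by building and maintaining an even $v_1$--$u$ path of length $\ge4$) cost him several edges and several turns each, so that Mini's greedy extension of $N(v_1)$ stays ahead.

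For the lower bound I would prove: every $(\c{C}_\infty\sm\{C_3\})$-saturated graph $G$ on $n$ vertices satisfies $|E(G)|\ge \tfrac54 n-2$, equivalently (as $G$ is connected) the cycle rank $|E(G)|-n+1$ is at least $\tfrac14 n-1$. The input is a short list of local restrictions forced by saturation via the path criterion above: no two vertices of degree $1$ are adjacent or share a neighbour; a degree-$1$ vertex cannot sit at the end of a long stretch of degree-$\le 2$ vertices; and more generally any ``sparse'' stretch of $G$ (a path through vertices of degree $\le 2$) must be short or must sit next to extra edges. One then runs a discharging argument: give each vertex charge equal to its degree, let vertices of degree $\le 2$ draw the missing charge from the higher-degree vertices that witness their saturation, and conclude that the total charge, hence $2|E(G)|$, is at least $\tfrac52 n-4$.

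The main obstacle is in the upper bound: showing that Mini really can drive $v_1$ to universality against an adversary whose sole aim is to keep some vertex permanently unreachable from $v_1$ by maintaining a short even path. This needs careful bookkeeping of which vertices are ``threatened'' on each turn and of the edges Max must spend, and it is where the slack between $2n-3$ (the size of the saturated book $K_{2,n-2}+e$) and $2n-2$ gets used. For the lower bound the delicate step is the discharging: unlike degree-$1$ vertices, degree-$2$ vertices can be plentiful (as in the friendship graph, where all but one vertex have degree $2$), so one must extract the ``$\tfrac14 n$ worth of excess edges'' precisely from the impossibility of long sparse stretches.
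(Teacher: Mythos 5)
Both halves of your argument diverge from the paper's, and both have genuine gaps. For the upper bound, the plan of forcing a universal vertex $v_1$ fails: Max can permanently cut off a constant fraction of vertices from $v_1$. Concretely, Max spends three early moves building an odd path $v_1abc$; thereafter, each move $cu$ to a fresh isolated vertex $u$ creates the even path $v_1abcu$ of length $4$, so $v_1u$ is illegal from that moment on and forever (paths never disappear). Max blocks one new vertex per turn while Mini saves at most one, so roughly half the vertices never become neighbours of $v_1$, and your structural lemma about $G-v_1$ (whose conclusion $e(G-v_1)\le n-1$ is fine \emph{when} $v_1$ is universal) never applies. This is exactly the obstruction you flag as ``the main obstacle,'' but it is not a bookkeeping issue --- the target invariant is unreachable. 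The paper avoids this by having Mini maintain a locally repairable invariant instead: every vertex, after deleting at most one exceptional edge $B(G^t)$, has all but one of its incident edges in a triangle (``$1$-goodness''). A single move by Max can damage this property in only a bounded way, and one move by Mini repairs it; a $2$-good graph with no odd cycle of length $\ge 5$ then has no cycle of length $\ge 5$ at all, and a separate Tur\'an-type lemma gives $\ex(n,\{C_5,C_6,\ldots\})\le 2n-2$.

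For the lower bound, you propose the purely structural statement that \emph{every} $(\c{C}_\infty\sm\{C_3\})$-saturated graph has at least $\f{5}{4}n-2$ edges, which would indeed suffice since $\sat_g\ge\sat$. But this is strictly stronger than what the paper proves: its Lemma giving $e(G)\ge\f54 n-3$ applies only to saturated graphs that are additionally $2$-good, and that is precisely why \emph{Max} must also play the goodness-maintaining strategy --- the lower bound in this paper is not strategy-free. The paper's counting uses $2$-goodness essentially (most spanning-tree edges lie in triangles, yielding $\ge n/4$ non-tree edges), and your replacement --- a discharging argument driven by ``no long sparse stretches'' --- is only a sketch; the key local lemmas and the discharging rules are not supplied, and it is not clear the unrestricted structural claim is even true. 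As written, neither bound is established.
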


This result is in sharp contrast to the fact that $\sat_g(\c{C}_\infty;n)=\floor{\quart n^2}$, see \cite{westSurv}.

\textbf{Notation}.  Throughout the paper we let $G^t$ denote the graph in the relevant saturation game after $t$ edges have been added, and we let $e^t$ denote the edge of $G^t$ that is not in $G^{t-1}$.  We let $N^t(x)$ denote the neighborhood of $x$ in $G^t$ and let $d^t(x,y)$ denote the distance between $x$ and $y$ in $G^t$.  We let $t=\infty$ correspond to the point in time when the graph has become $\c{F}$-saturated.  If $X^t$ is a real number depending on $t$, we define $\Delta(X^t)=X^t-X^{t-2}$.  We let $E(G)$ denote the set of edges of the graph $G$ and let $e(G)=|E(G)|$.  We write $G-X$ when $X$ is a vertex, edge, or set of vertices and edges to denote the graph obtained by deleting these vertices and edges from $G$.  We omit floor and ceiling signs throughout whenever these are not crucial.

\textbf{Organization}.  In Section 2 we present a strategy for Max that guarantees that the game ends with at least as many edges as stated in Theorem~\ref{T-5}.  In Section 3 we modify this strategy to obtain the lower bound of Theorem~\ref{T-gen}.  In Section 4 we present a strategy for Mini that guarantees that the game ends with at most as many edges as the upper bound of Theorem~\ref{T-gen}.  Theorem~\ref{T-OneCyc} is proven in Section 5.  Theorem~\ref{T-Mod3} is proven in Section 6.  We end with some concluding remarks in Section 7.
\section{Proof of Theorem~\ref{T-5}}\label{S-Low1}

Let $\c{C}$ be as in the hypothesis of Theorem~\ref{T-5}.  We wish to construct a strategy for Max in the $\c{C}$-saturation game such that at the end of each of Max's turns, $G^t$ is bipartite with parts of roughly the same size. To this end, let $uv$ denote the edge of $G^1$.  Let $1<\gamma\le 2$ and $\delta=\rec{\gamma-1}$.  We say that $G^t$ is $\gamma$-good if it satisfies the following four conditions.

\begin{itemize}
	\item[(1*)] $G^t$ contains exactly one non-trivial connected component, and this component is bipartite with parts $U^t\ni u$ and $V^t\ni v$.
	
	Let $U_0^t=N^t(v)$ (the good vertices), and $U_1^t=U^t\setminus U_0^t$ (the bad vertices).  Define an analogous partition for $V^t$.
	
	\item[(2*)] Every vertex of $U^t\cup V^t$ is adjacent to a vertex in $U_0^t\cup V_0^t$.
	
	\item[(3*)] $b_U^t:=|V_1^t|+(|U^t|-\gamma|V^t|-\delta)\le0$ and $b_V^t:=|U_1^t|+(|V^t|-\gamma|U^t|-\delta)\le 0$.
	
	\item[(4*)] $b_U^t+b_V^t\le -2$.
	
\end{itemize}
We note that $b_U^t\le 0$ implies that, up to an additive constant factor, $|U^t|$ is larger than $|V^t|$ by a multiplicative factor of at most $\gam$.  Moreover, if $|U^t|\approx \gam |V^t|$, then $b_U^t\le 0$ guarantees that there are few vertices in $V_1^t$.  We note that (2*) and (4*) are trivially satisfied if $U_1^t=V_1^t=\emptyset$.  An important consequence of being $\gam$-good is the following.  To make this statement precise, if $G^t$ satisfies (1*) but $G^{t+1}$ contains more than one non-trivial connected component, we define $U^{t+1}:=U^t$ and $V^{t+1}:=V^t$.

\begin{lem}\label{L-Bipartite}
	Let $\c{C}$ be a set of graphs with $C_3,C_5\in \c{C}$.  Let $t$ be such that $G^{t}$ satisfies (1*) and (2*).  Then $U^{t+1}$ and $V^{t+1}$ are independent sets for any valid choice of $e^{t+1}$ in the $\c{C}$-saturation game. 
\end{lem}
\begin{proof}
	$U^{t}$ and $V^{t}$ are independent sets since $G^{t}$ satisfies (1*).  If $v',v''\in V^{t}$, let $u',u''\in U_0^{t}$ be neighbors of $v'$ and $v''$ respectively, noting that such vertices exist since $G^{t}$ satisfies (2*). Then \[d^{t}(v',v'')\le d^{t}(v',u')+d^{t}(u',v)+d^{t}(v,u'')+d^{t}(u'',v'')=4.\]  Thus having $e^{t+1}=v'v''$ would create either a $C_3$ or a $C_5$ since $d^{t}(v',v'')$ is even, which is forbidden in the $\c{C}$-saturation game.  The analysis for $U^{t+1}$ is similar.
\end{proof}

We derive Theorem~\ref{T-5} by first proving the following.
\begin{prop}\label{P-algLow}
	Let $\c{C}$ be a set of graphs which are not bipartite and with $C_3,C_5\in \c{C}$.  Then there exists a strategy for Max in the $\c{C}$-saturation game such that for all odd $t$, whenever $G^{t-1}$ contains an isolated vertex, Max can choose $e^t$ so that $G^{t}$ is $\f{3}{2}$-good.
\end{prop}

\begin{proof}
Throughout this proof we implicitly use the fact that since $\c{C}$ consists of graphs which are not bipartite, any move that Max makes which keeps the graph bipartite must be legal. It is not difficult to see that $G^1$ is $\f{3}{2}$-good. Assume Max has been able to play so that $G^{t-2}$ is $\f{3}{2}$-good with $t$ odd.  If $G^{t-1}$ contains no isolated vertices then we are done, so assume that there exists an isolated vertex $z$ in $G^{t-1}$.  Let $e^{t-1}=xy$.  We will say that $e^{t-1}$ is an $I$ (Internal) move if $x\in U^{t-2},\ y\in V^{t-2}$, an $O$ (Outside) move if $x,y\notin U^{t-2}\cup V^{t-2}$, an $AU$ (Add to $U$) move if $x\in V^{t-2},\ y\notin U^{t-2}\cup V^{t-2}$, and an $AV$ (Add to $V$) move if $x\in U^{t-2},\ y\notin U^{t-2}\cup V^{t-2}$.  Note that an $AU$ move causes $y$ to be added to $U^{t-1}$.    Lemma~\ref{L-Bipartite} shows that $e^{t-1}$ must be one of the four types of moves discussed above (possibly after relabeling $x$ and $y$), so it is enough to show how Max reacts to each of these types of moves.  

We note that if we assume $G^{t-2}$ satisfies (1*), any vertex not in $U^{t-2}\cup V^{t-2}$ must be isolated. When Max plays, it will always be obvious that (1*) is maintained, so we will not verify this condition in our analysis.  Throughout the rest of this section we write $\gam$ instead of $\f{3}{2}$ whenever our argument continues to hold when $G^{t-2}$ is assumed to be $\gam$-good for any $1<\gam\le 2$, and we will emphasize whenever we need to use $\gam=\f{3}{2}$ in our proofs.  This will make proving the lower bound of Theorem~\ref{T-gen} somewhat simpler.

\begin{claim}\label{Cl-I}
	If $e^{t-1}$ is an $I$ move, then Max can play so that $G^t$ is $\gamma$-good.
\end{claim}
\begin{proof}
	If there exists $u'\in U^{t-1},\ v'\in V^{t-1}$ with $u'v'\notin G^{t-1}$, then Max adds the edge $u'v'$, and it is not hard to see that in this case $G^t$ is $\gamma$-good.  If no such pair of vertices exists, then $U^{t-1}\cup V^{t-1}$ is a complete bipartite graph with, say, $|U^{t-1}|\le |V^{t-1}|$, in which case Max adds the edge $zv$.  This gives $\Delta(|U^t|)=1$ and $\Delta(|X^t|)=0$ for $X=U_1,V,V_1$.   Since $U^{t-1}\cup V^{t-1}$ is a complete bipartite graph (and since Max added no vertex to $U_1^{t-1}\cup V_1^{t-1}$), $U_1^t=V_1^t=\emptyset$, so (2*) and (4*) hold.  We have $\Delta(b_V^t)= -\gamma\le 0$, and hence $b_V^t\le 0$.  If $|U^{t-1}|< \delta= \rec{\gamma-1}$, we automatically have $b_U^t\le 0$.  Otherwise $|V^{t-1}|\ge |U^{t-1}|\ge\rec{\gamma-1}$, which implies \[|U^t|=|U^{t-1}|+1\le |V^{t-1}|+(\gamma-1)\rec{\gamma-1}\le |V^{t-1}|+(\gamma-1)|V^{t-1}|=\gamma |V^{t-1}|=\gamma|V^t|.\] Thus $b_U^t\le 0$ and (3*) holds, so $G^t$ is $\gamma$-good.
\end{proof}
For some slight ease of notation, we say that a vertex $w$ satisfies (2') if it is adjacent to some vertex in $U_0^t\cup V_0^t$.  Thus $G^t$ satisfying (2*) is equivalent to every vertex of $U^t\cup V^t$ satisfying (2').  We also note that if $w$ satisfies (2') at time $t-2$, then it will also satisfy (2') at time $t$.

\begin{claim}\label{Cl-O}
	If $e^{t-1}$ is an $O$ move, then Max can play so that $G^t$ is $\f{3}{2}$-good.
\end{claim}
\begin{proof}
	Since $b_U^{t-2}+b_V^{t-2}\le -2$, one of $b_U^{t-2}$ or $b_V^{t-2}$ must be at most $-1 $.  If $b_U^{t-2}\le -1\le -\half$, Max adds the edge $xv$, which leads to $\Delta(|U^t|)=\Delta(|V^t|)=\Delta(|V_1^t|)=1,\ \Delta(|U_1^t|)=0$.  $x$ and $y$ satisfy (2'), so (2*) continues to hold.  We have $\Delta(b_V^t)=1-\gamma\le 0$ and $\Delta(b_U^t)=2-\gamma=\half$ when $\gam=\f{3}{2}$, so $b_U^t\le 0$ since we assumed $b_U^{t-2}\le -\half$, and thus (3*) holds.  We have $\Delta(b_U^t)+\Delta(b_V^t)=3-2\gamma =0$ since $\gam=\f{3}{2}$, so (4*) holds and $G^t$ is $\f{3}{2}$-good.  If instead $b_V^{t-2}\le -1$, Max adds the edge $xu$ and essentially the same analysis gives the result. 
\end{proof}

	In response to $AU$ and $AV$ type moves, Max has to consider the overall ``State'' of $G^{t-1}$ in order to make his move.  To this end, we make the following observations.
	
	\begin{claim}\label{Cl-Overflow} ~
		
		\begin{itemize}
			\item[(a)] If $|U^{t-1}|>\gamma |V^{t-1}|+\delta$, then $e^{t-1}$ is an $AU$ move, $V_1^{t-2}=V_1^{t-1}=\emptyset$, and $b_V^{t-2}\le -1$.
			
			\item[(b)] If $|V^{t-1}|>\gamma |U^{t-1}|+\delta$, then $e^{t-1}$ is an $AV$ move, $U_1^{t-2}=U_1^{t-1}=\emptyset$, and $b_U^{t-2}\le -1$.
			
			\end{itemize}
	\end{claim}
	\begin{proof}
		For (a), assume that $|U^{t-1}|>\gamma |V^{t-1}|+\delta$.  Since we assumed that $b_U^{t-2}\le 0$, and in particular that $|U^{t-2}|\le \gamma|V^{t-2}|+\delta$ since $|V_1^{t-2}|$ is non-negative, it must be that $e^{t-1}$ is an $AU$ move, meaning $b_U^{t-1}=b_U^{t-2}+1\le 1$.  Thus $|V_1^{t-1}|=(-|U^{t-1}|+\gamma|V^{t-1}|+\delta)+b_U^{t-1}<1$, which implies that $|V_1^{t-1}|=0$ since $|V_1^{t-1}|$ is a non-negative integer, and thus $|V_1^{t-2}|=0$ as nothing is removed from $V_1^{t-2}$ by an $AU$ move.  Lastly, $b_U^{t-2}+b_V^{t-2}\le -2$ by (4*) and $b_U^{t-2}+1=b_U^{t-1}>0$, so \[b_V^{t-2}< b_V^{t-2}+b_U^{t-1}= b_V^{t-2}+b_U^{t-2}+1\le-1.\]
		This proves (a), and the analysis for (b) is similar.
	\end{proof}

	\begin{claim}\label{Cl-States}
		If $e^{t-1}$ is an $AU$ move, then the game must be in precisely one of the following three States.
		\begin{itemize}
			\item State N (Nice): $U_1^{t-1}= V_1^{t-1}=\emptyset$, $|U^{t-1}|\le \gamma|V^{t-1}|+\delta$, and $|V^{t-1}|\le \gamma|U^{t-1}|+\delta$.
			\item State OU (Overflow $U$): $|U^{t-1}|>\gamma|V^{t-1}|+\delta$ and $V_1^{t-1}=\emptyset$.  
			\item State C (Clean-Up): $|U_1^{t-1}\cup V_1^{t-1}|\ne 0$, $|U^{t-1}|\le \gamma|V^{t-1}|+\delta$, and $|V^{t-1}|\le \gamma|U^{t-1}|+\delta$.
		\end{itemize}
	\end{claim}
	\begin{proof}
		Observe that we always have $|V^{t-1}|\le \gamma|U^{t-1}|+\delta$ by Claim~\ref{Cl-Overflow} since we assume that $e^{t-1}$ is not an $AV$ move.  Assume that the game is not in State $N$. If $|U^{t-1}|>\gam |V^{t-1}|+\delta$, then Claim~\ref{Cl-Overflow} shows that $V_1^{t-1}=\emptyset$.  If $|U^{t-1}|\le \gam |V^{t-1}|+\delta$, then by assumption of the game not being in State N, we must have $|U_1^{t-1}\cup V_1^{t-1}|\ne 0$, and hence the game is in State C.
	\end{proof}

	By Claim~\ref{Cl-States}, in order to show how Max reacts to an $AU$ move, it is enough to show how he reacts to $AU$ moves that put the game into each of the States defined above.
	
	\begin{claim}
		If $e^{t-1}$ is an $AU$ move putting the game into State N, then Max can play so that $G^t$ is $\gamma$-good.
	\end{claim}
	\begin{proof}
		Max adds the edge $zu$.  With this we maintain that $U_1^t=V_1^t=\emptyset$, so (2*) and (4*) are satisfied, and we have that $\Delta(|U^t|)=\Delta(|V^t|)=1$, from which one can deduce that (3*) is satisfied.
	\end{proof}
	\begin{claim}\label{Cl-OU}
		If $e^{t-1}$ is an $AU$ move putting the game into State OU, then Max can play so that $G^t$ is $\f{3}{2}$-good.
	\end{claim}
	\begin{proof}
		Max adds the edge $zu$.  This gives $\Delta(|U^t|)=\Delta(|V^t|)=1,\ \Delta(|V_1^t|)=0,\ \Delta(|U_1^t|)\le 1$.  Clearly $z$ satisfies condition (2'), and $y$ does as well since $V_1^{t-1}=V_1^t=\emptyset$ by virtue of the game being in State OU, so (2*) is maintained.  We have $\Delta(b_U^t)=1-\gamma\le 0$ and $\Delta(b_V^t)\le 2-\gamma\le 1$, so $b_V^t\le 0$ by Claim~\ref{Cl-Overflow}, and thus (3*) is maintained.  Lastly, $\Delta(b_U^t)+\Delta(b_V^t)\le 3-2\gamma=0$ since $\gam=\f{3}{2}$, so (4*) is maintained and $G^t$ is $\f{3}{2}$-good.
	\end{proof}
	\begin{claim}\label{Cl-C}
		If $e^{t-1}$ is an $AU$ move putting the game into State C, then Max can play so that $G^t$ is $\gamma$-good.
	\end{claim}
	\begin{proof}
		First assume that $V_1^{t-1}\ne \emptyset$.  If $x\in V_1^{t-1}$, then Max adds the edge $xu$, and otherwise Max picks an arbitrary $v'\in V_1^{t-1}$ and adds the edge $v'u$.  After this we have $\Delta(|U^t|) =1,\ \Delta(|V^t|)=0,\ \Delta(|V_1^t|)=-1,\ \Delta(|U_1^t|)\le 1$. (2*) is maintained since we made sure that $y$'s neighbor $x$ was in $V_0^{t}$.  We have $\Delta(b_U^t)=0$ and $\Delta(b_V^t)\le 1-\gamma\le 0$, so (3*) is maintained.  $\Delta(b_U^t)+\Delta(b_V^t)\le 1-\gamma\le 0$, so (4*) is maintained and $G^t$ is $\gamma$-good.
		
		Now assume $V_1^{t-1}=\emptyset$, which implies $U_1^{t-1}\ne \emptyset$ since we are in State C.  In this case Max arbitrarily picks a $u'\in U_1^{t-1}$ and adds the edge $u'v$, giving $\Delta(|U^t|)=1,\ \Delta(|V^t|)=\Delta(|V_1^t|)=0,\ \Delta(|U_1^t|)\le 0$. (2*) is maintained since $y$ has a neighbor in $V^t=V_0^t$. Because we are in State C and $|V_1^t|=|V_1^{t-1}|=0$, we have \[b_U^t=|U^t|-\gam |V^t|-\delta=|U^{t-1}|-\gam |V^{t-1}|-\delta\le 0.\] We also have $\Delta(b_V^t)\le -\gamma\le 0$, so (3*) is maintained.  Lastly, $\Delta(b_U^t)+\Delta(b_V^t)\le 1-\gamma\le 0$, so (4*) is maintained and $G^t$ is $\gamma$-good.
	\end{proof}
	Thus regardless of the State, Max can react as desired to $AU$ moves, and an analogous argument works for $AV$ moves.  Thus regardless of what $e^{t-1}$ is, Max can play so that $G^t$ is $\f{3}{2}$-good whenever $G^{t-1}$ contains an isolated vertex.
\end{proof}

We emphasize for later that the only two places in this subsection where we explicitly used $\gam=\f{3}{2}$ was in the proof of Claim~\ref{Cl-O} and in verifying (4*) in the proof of Claim~\ref{Cl-OU}.  Before proving Theorem~\ref{T-5}, we require a small technical lemma.

\begin{lem}\label{L-Forever}
	For every even $t$, if $G^{t-1}$ satisfies (1*) and (2*) in the $\c{C}$-saturation game, then Max can play so that $G^t$ satisfies (1*) and (2*).
\end{lem}
\begin{proof}
	By Lemma~\ref{L-Bipartite}, $e^{t-1}$ must be an $I,\ O$, $AU$, or $AV$ type move.  If $e^{t-1}$ is of type $I$ or $O$, then Max plays exactly as he did in Proposition~\ref{P-algLow}, and one can verify that in this case (1*) and (2*) are satisfied (in particular, the strategy and proof for these properties did not utilize the isolated vertex $z$, nor properties (3*) or (4*) in these cases).  If, say, $e^{t-1}=xy$ is an $AU$ move with $x\in V^{t-2}$, then Max adds the edge $vy$, in which case one can verify that (1*) and (2*) are satisfied. A similar strategy and analysis holds for $AV$ moves.
\end{proof}

\begin{proof}[Proof of Theorem~\ref{T-5}]
	Max uses the strategy given by Proposition~\ref{P-algLow} as long as $G^{t-1}$ contains isolated vertices, after which Max uses the strategy given by Lemma~\ref{L-Forever}.   Note that this implies that $G^t$ satisfies (1*) and (2*) for all even $t$, and thus $G^t$ will be bipartite for all $t$ by Lemma~\ref{L-Bipartite}.  In particular this holds for $t=\infty$, so $e(G^\infty)=|U^\infty||V^\infty|$.  Because $|U^\infty|+|V^\infty|=n$, this product will be minimized when $||U^\infty|-|V^\infty||$ is as large as possible.  We thus need to determine how large, say, $|U^\infty|$ can be.
	
	Let $T$ denote the largest even number such that $G^{T-1}$ contains isolated vertices.  Since $G^{T+2}$ satisfies (1*) and contains no isolated vertices, $G^{T+2}$ must be connected, and hence $U^\infty=U^{T+2}$ and $V^\infty=V^{T+2}$.  Because Max followed Proposition~\ref{P-algLow} at time $T$, we have $|U^T|\le \f{3}{2} |V^T|+\delta$.  Because the only components of $G^T$ were $U^T\cup V^T$ and isolated vertices, we have $|U^{T+2}|\le |U^T|+2$.  In total we conclude that $|U^\infty|\le \f{3}{2} |V^\infty|+\delta+2$.  Thus $e(G^\infty)$ is minimized if, say, $|U^\infty|= \f{3}{2} |V^\infty|+\delta+2$, in which case we have \[n=|U^\infty|+|V^\infty|= \f{5}{2}|V^\infty|+\delta+2\implies |V^\infty|= \f{2}{5}(n-2-\delta).\] 
	We conclude that $e(G^\infty)=|U^\infty||V^\infty|\ge \f{6}{25} n^2+o(n^2)$ as desired.
\end{proof}

\section{The Lower Bound of Theorem~\ref{T-gen}}\label{S-Low2}
The strategy of Proposition~\ref{P-algLow} gives a bound for the $\c{C}_{2k+1}$-saturation game for $k\ge 2$.  When $k$ is large we can further improve upon this strategy.  Namely, Max will be able to maintain that $G^t$ is $\gam_k$-good with $\gam_k$ such that $\lim_{k\to \infty}\gam_k\to 1$, increasing the total number of edges guaranteed at the end of the game for large $k$.  The strategy will be essentially the same as before but with two key differences. First, we will identify our ``bad vertices'' as those that are at distance roughly $k$ from $u$ or $v$, instead of those that are not adjacent to $u$ or $v$.  Second, in the proof of Claim~\ref{Cl-C} we ``fixed'' a bad vertex $x$ by adding the edge, say, $xv$.  In this setting we will instead fix this vertex by adding an edge $zv$ for some ``representative'' $z$ that lies along a shortest path from $x$ to $v$.  The idea with this is that a given $z$ could represent multiple bad vertices, so adding this edge has the potential to make multiple bad vertices sufficiently close to $v$.  We note that our strategy for Max in this section is also valid for the $\c{C}$-saturation game where $\c{C}$ is any set of odd cycles with $\c{C}_{2k+1}\sub \c{C}$ and $k\ge 3$.

We now proceed with our proof of the lower bound of Theorem~\ref{T-gen}.  Throughout this section we fix some $k\ge 3$. Our notation for the proof of Proposition~\ref{P-lowAlg2} will be very similar to the notation of the proof of Proposition~\ref{P-algLow}, but we emphasize that a large portion of the notation used here will differ somewhat from how it was used before.  

Let $uv$ denote the first edge of $G^t$.  When the connected component containing $u$ and $v$ in $G^t$ is bipartite, we let $U^t\ni u$ and $V^t\ni v$ denote the parts of this bipartition.
Let $\ell=k$ if $k$ is even and $\ell=k+1$ if $k$ is odd.  Define $U_0^t=\{u'\in U^t: d^t(u',u)<\ell\}$ and $\tilde{U}_1^t=U^t\setminus U_0^t$.   Arbitrarily assign a linear ordering to the vertices of $U^t$.  We will say that a vertex $x\in U^t$ is the representative for $u'\in \tilde{U}_1^t$ if 

\begin{enumerate}
	\item $d^t(x,u)=4$.
	\item $x$ lies along a shortest path from $u'$ to $u$.
	\item $x$ is the minimal vertex (with respect to the ordering of $U^t$) satisfying these properties.
\end{enumerate}
We note that since $k\ge 3$, we have $d^t(x,u)\le d^t(u',u)$, so every $u'\in \tilde{U}_1^t$ has a representative.  Define $U_1^t$ to be the set of vertices that are representatives for some vertex of $\tilde{U}_1^t$.  Note that $|\tilde{U}_1^t|=0$ if and only if $|U_1^t|=0$.    We similarly define $V_0^t,\ \tilde{V}_1^t$, and $V_1^t$.  

For $1<\gam\le 2$ we let $\delta=\rec{\gam-1}$, and we now say that $G^t$ is $\gam$-good if it satisfies the same four conditions as we had before but with our new definitions for the sets $U_0^t,\ U_1^t,\ V_0^t$, and $V_1^t$ being used.  We first prove an analog of Lemma~\ref{L-Bipartite} using our new definitions.  Again if $G^t$ satisfies (1*) but $G^{t+1}$ has more than one non-trivial connected component, we define $U^{t+1}:=U^t$ and $V^{t+1}:=V^t$.

\begin{lem}\label{L-AltPart}
	Let $t$ be such that $G^{t}$ satisfies (1*) and (2*).  Then $U^{t+1}$ and $V^{t+1}$ are both independent sets for any valid choice of $e^{t+1}$ in the $\c{C}_{2k+1}$-saturation game for $k\ge 3$.
\end{lem}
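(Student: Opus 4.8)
The plan is to follow the proof of Lemma~\ref{L-Bipartite} essentially line for line, the only new point being that the distance threshold defining $U_0^t$ must be combined with a parity observation. First I would note that $U^t$ and $V^t$ are independent because (1*) forces the non-trivial component of $G^t$ to be bipartite with exactly these parts, so it suffices to rule out any valid $e^{t+1}$ with both endpoints in $U^t$ or both endpoints in $V^t$: any other valid edge either misses the component containing $u$ and $v$ (so $U^{t+1}=U^t$ and $V^{t+1}=V^t$) or attaches a previously isolated vertex as a leaf to one of the parts, and in either case the updated parts remain independent.

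Next I would fix distinct $v',v''\in V^t$ and, using (2*), pick a neighbor $w'$ of $v'$ and a neighbor $w''$ of $v''$ lying in $U_0^t\cup V_0^t$; since $v'$ and $v''$ lie in the part $V^t$, their neighbors lie in $U^t$, so in fact $w',w''\in U_0^t$. By the definition of $U_0^t$ we have $d^t(w',u)<\ell$ and $d^t(w'',u)<\ell$, and because $w'$, $w''$, and $u$ all lie in the same bipartition class $U^t$, these distances are even; since $\ell$ is even by construction, this upgrades to $d^t(w',u)\le\ell-2\le k-1$ and $d^t(w'',u)\le k-1$. Concatenating the edge $v'w'$, a shortest path from $w'$ to $u$, a shortest path from $u$ to $w''$, and the edge $w''v''$ gives
\[d^t(v',v'')\le 1+(k-1)+(k-1)+1=2k;\]
since $d^t(v',v'')$ is also even and at least $2$, adding the edge $v'v''$ would create a cycle of odd length $d^t(v',v'')+1\in\{3,5,\dots,2k+1\}$, a member of $\c{C}_{2k+1}$, which is forbidden. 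Hence $V^{t+1}$ remains independent, and I would handle edges inside $U^t$ by the symmetric argument, using that (2*) forces every vertex of $U^t$ to have a neighbor in $V_0^t$.

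I do not expect a genuine obstacle here, since the argument is structurally the same as for Lemma~\ref{L-Bipartite}. The one step that requires care is the inequality $d^t(w',u)\le k-1$ for $w'\in U_0^t$: this is exactly what the choice of $\ell$ (namely $k$ rounded up to the nearest even integer), together with the fact that distances within a single bipartition class are even, is designed to deliver, and it is precisely what forces the odd cycle produced by the illegal edge to have length at most $2k+1$ rather than possibly $2k+3$.
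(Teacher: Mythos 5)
Your proof is correct and follows essentially the same route as the paper's: pick neighbors in $U_0^t$ via (2*), use parity within a bipartition class together with the evenness of $\ell$ to bound $d^t(u',u)\le \ell-2\le k-1$, and conclude that the would-be edge closes an odd cycle of length at most $2k+1$. The paper's proof is just a terser version of the same argument (it writes the bound as $1+(\ell-2)+(\ell-2)+1\le 2k$ and notes the parity facts in passing), so there is nothing to flag.
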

\begin{proof}
	Let $v',v''\in V^t$, and let $u',u''\in U_0^t$ be neighbors of $v'$ and $v''$ respectively, noting that such vertices exist by (2*).  We then have
	\[d^t(v',v'')\le d^t(v',u')+d^t(u',u)+d^t(u,u'')+d^t(u'',v'')\le 1+(\ell-2)+(\ell-2)+1\le 2k,\]
	where we used that $d^t(u',u)<\ell$ is even and that $\ell\le k+1$. Since $d^t(v',v'')$ is even, having $e^{t+1}=v'v''$ would create a $C_{2k'+1}$ for some $k'\le k$, which is forbidden in the $\c{C}_{2k+1}$-saturation game. The analysis for $U^t$ is similar.
\end{proof}

\begin{prop}\label{P-lowAlg2}
	There exists a strategy for Max in the $\c{C}_{2k+1}$-saturation game when $k\ge 3$ such that for all odd $t$, whenever $G^{t-1}$ contains an isolated vertex, Max can add an edge so that $G^{t}$ is $\gam_k$-good with
	\[
		\gam_k=\f{4k^{-1}+\sqrt{16k^{-2}+4}}{2}.
	\]
\end{prop}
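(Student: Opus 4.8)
The plan is to mirror the proof of Proposition~\ref{P-algLow} essentially verbatim, replacing the old notions of ``good/bad vertex'' and ``representative'' with the distance-based ones just introduced, and tracking where the value of $\gam$ enters. First I would check the base case: $G^1$ consists of the single edge $uv$, so $U^1=\{u\}$, $V^1=\{v\}$, $\tilde U_1^1=\tilde V_1^1=\emptyset$, and one verifies (1*)--(4*) hold for $\gam_k$ exactly as before (this uses only $\delta=\rec{\gam_k-1}\ge 0$ and $\gam_k>1$, which hold since $\gam_k$ is the positive root of $\gam^2-4k^{-1}\gam-1=0$, so $\gam_k>1$). Then, assuming $G^{t-2}$ is $\gam_k$-good with $t$ odd and $G^{t-1}$ has an isolated vertex, I would again classify Mini's move $e^{t-1}=xy$ as one of the four types $I$, $O$, $AU$, $AV$, invoking Lemma~\ref{L-AltPart} in place of Lemma~\ref{L-Bipartite} to guarantee these are the only possibilities. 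The same State-analysis (States N, OU, C via analogs of Claim~\ref{Cl-Overflow} and Claim~\ref{Cl-States}) applies, since those claims used only the inequalities (3*), (4*) and the combinatorial fact that an $AU$ move removes nothing from the relevant ``bad'' set — and with the distance definition it is still true that an $AU$ move (adding a new leaf far from both $u$ and $v$, in fact adjacent to a vertex of $V$) cannot decrease $\tilde U_1$ or $U_1$.

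The new content is in the responses to $I$, $O$, and especially the State-C case of $AU$. For the $I$, $O$, State-N, and State-OU responses, Max plays exactly the same edges ($u'v'$ or $zv$; $xv$ or $xu$; $zu$; $zu$ respectively), and the same $\Delta$-bookkeeping goes through, except that now in Claim~\ref{Cl-O} we need $\Delta(b_U^t)=2-\gam_k\le \half b_U^{\text{slack}}$-type bounds to close: specifically $b_U^{t-2}\le -1$ must dominate $\Delta(b_U^t)=2-\gam_k$, which requires $2-\gam_k\le 1$, i.e.\ $\gam_k\ge 1$ — fine — but to preserve (4*) we need $\Delta(b_U^t)+\Delta(b_V^t)=3-2\gam_k\le 0$, i.e.\ $\gam_k\ge \f32$, which FAILS for large $k$. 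So the $O$-move response must change: instead of adding $xv$ (pushing $y$ to $V$ as a new bad vertex and $x$ to $U$), Max should connect the two outside vertices $x,y$ to the graph more cheaply, e.g.\ attach one of them near $u$ and the other near $v$ along short paths, so that neither becomes a bad vertex — this is exactly the ``representative'' trick foreshadowed in the section's opening paragraph, and it makes $\Delta(|U_1^t|)=\Delta(|V_1^t|)=0$ so (4*) is automatic. The State-C response is the heart of it: when $x$ (or some $u'\in U_1^{t-1}$, a representative) is bad, Max adds the edge $zv$ where $z$ lies at distance $4$ from $v$ along a shortest path from the bad vertex to $v$; because $z$ can be the representative of many bad vertices simultaneously, this single edge can move several vertices of $\tilde U_1$ into $U_0$, giving $\Delta(|U_1^t|)\le -1$ (one representative eliminated) while $\Delta(|U^t|)=0$, $\Delta(|V^t|)=1$, and then the inequalities $\Delta(b_V^t)=0$, $\Delta(b_U^t)\le 1-\gam_k\le 0$ close with no constraint beyond $\gam_k\ge 1$.

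The final step is to extract the precise value $\gam_k=\big(4k^{-1}+\sqrt{16k^{-2}+4}\big)/2$. This comes from the one place where the distance bound is genuinely tight: in the analog of Claim~\ref{Cl-I}, when $U^{t-1}\cup V^{t-1}$ is already complete bipartite and Max must grow the larger side by attaching $z$, the bound $b_U^t\le 0$ needs $|U^{t-1}|\ge \delta$ to imply $|U^t|\le \gam_k|V^t|$ — and more importantly the distance/representative structure forces a relation of the form ``a representative $z$ at distance $4$ can cover vertices up to distance $\ell=k$ or $k+1$, so each clean-up step of cost $1$ in $|V^t|$ pays for up to $\sim k/\dots$ bad vertices,'' which balances against the $\gam_k-1$ accumulation rate of bad vertices per outside/add move. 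Setting the net drift of $b_U^t+b_V^t$ to be $\le 0$ across a full pair of moves yields a quadratic $\gam_k^2 - 4k^{-1}\gam_k - 1 \le 0$ (the $4$ being the representative distance, the $k^{-1}$ the reciprocal of the bad-vertex distance threshold), whose positive root is exactly the stated $\gam_k$; one then checks $\gam_k\to 1$ as $k\to\infty$. I expect the main obstacle to be verifying condition (2*) throughout the State-C clean-up: after adding $zv$, one must confirm that every vertex that was ``covered'' by $z$ — i.e.\ every bad vertex $u'$ with $z$ on its shortest path to $u$ — now has a neighbor in $U_0^t\cup V_0^t$, which requires a careful distance argument using that $z$ at distance $4$ from both $u$ and $v$ is now at distance $\le \ell-2 + \dots$ from the far vertices, keeping them (or their neighbors) inside the distance-$\ell$ ball. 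Assembling all the $\Delta$-inequalities into the single quadratic constraint, and being careful about the parity bookkeeping when $k$ is odd versus even (hence the definition $\ell=k$ or $k+1$), is where the real care is needed; everything else is a transcription of Section~\ref{S-Low1}.
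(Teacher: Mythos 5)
Your overall architecture is right (transcribe the proof of Proposition~\ref{P-algLow}, redefine bad vertices by distance, use representatives at distance $4$ for the State-C clean-up, note that an $O$ move now creates no bad vertices because the two new vertices land within distance $2<\ell$ of $u$ and $v$), and your worry about re-verifying (2*) after a clean-up is legitimate and is resolved in the paper by first showing every vertex of $\tilde{U}_1^{t-2}$ is at distance \emph{exactly} $\ell$ from $u$ (Claim~\ref{Cl-Dist}), so that shortening its path by even one step puts it in $U_0$. But there is a genuine gap: you assert that for the State-OU response ``the same $\Delta$-bookkeeping goes through,'' and it does not. The paper explicitly flags \emph{two} places where $\gam=\f{3}{2}$ was needed: Claim~\ref{Cl-O} \emph{and} the verification of (4*) in Claim~\ref{Cl-OU}. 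You repair the first but not the second. In State OU the incremental bound is still $\Delta(b_U^t)+\Delta(b_V^t)\le 3-2\gam_k$, which is positive for large $k$, so (4*) cannot be maintained move-by-move there. The paper's fix is a \emph{global} (non-incremental) estimate: a new counting lemma (Claim~\ref{Cl-Rep}) shows $|U_1^t|\le 4k^{-1}|U^t|$, because the shortest paths from distinct representatives to the vertices they represent are pairwise disjoint and each contributes at least $\ell/2-1\ge k/4$ vertices of $U^t$. Combining this with $|V^t|<\rec{\gam_k}|U^t|+O(1)$ (which holds in State OU by definition) gives
\[
b_U^t+b_V^t\le\left(-\gam_k+4k^{-1}+\rec{\gam_k}\right)|U^t|+O(1),
\]
and the coefficient of $|U^t|$ vanishes precisely because $\gam_k$ is the positive root of $-x^2+4k^{-1}x+1$. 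That is the true origin of the stated value of $\gam_k$; your attempt to derive it from Claim~\ref{Cl-I} and a heuristic ``net drift'' balance is misplaced, and without the disjoint-paths counting argument you have no bound on $|U_1^t|$ at all, so the State-OU case of (4*) remains unproven in your write-up.
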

\begin{proof}
$G^1$ is $\gam_k$-good, so inductively assume that Max has been able to play so that $G^{t-2}$ is $\gam_k$-good with $t$ odd.  Before describing the strategy, we first make an observation about $\tilde{U}_1^{t-2}$ and $\tilde{V}_1^{t-2}$.
\begin{claim}\label{Cl-Dist}
	 $\tilde{U}_1^{t-2}=\{u':d^{t-2}(u,u')=\ell\}$ and  $\tilde{V}_1^{t-2}=\{v':d^{t-2}(v,v')=\ell\}$.
\end{claim}
\begin{proof}
	Let $u'\in U_0^{t-2}$ be a neighbor of $v'\in \tilde{V}_1^{t-2}$, which exists by (2*).  Then  \[d^{t-2}(v',v)\le d^{t-2}(v',u')+d^{t-2}(u',u)+d^{t-2}(u,v)\le 1+(\ell-2)+1=\ell,\] with the $\ell-2$ term coming from the fact that $d^{t-2}(u',u)$ is even and less than $\ell$.  Since $v'\in \tilde{V}_1^{t-2}$ implies $d^{t-2}(v',v)\ge \ell$, the distance must be exactly $\ell$.  The analysis for $\tilde{U}_1^{t-2}$ is similar.
\end{proof}

We are now ready to describe the strategy we wish to use to prove Proposition~\ref{P-lowAlg2}.  We define $I,\ O,\ AU,$ and $AV$ moves, as well as the States N, OU, and C exactly as we had written before, but we now use our new definitions for the relevant sets.  The strategy for Proposition~\ref{P-lowAlg2} is almost the same strategy as that of Proposition~\ref{P-algLow} after using our new definitions for the relevant sets, move types, and States.   The only change we make is how Max responds to an AU or AV move that brings the game into State C.  Namely, before if Mini added the edge $e^{t}=xy$ with $y$ an isolated vertex in $G^{t-1}$, we checked to see if $x$ was in, say, $U_1^t$, in which case we added the edge $xv$.  We now instead check if $x\in \tilde{U}_1^t$, and if it is we add the edge $zv$ where $z$ is the representative for $x$.  Adding this edge makes $d^t(x,u)$ strictly smaller than $d^{t-2}(x,u)$, so by Claim~\ref{Cl-Dist} we will have $d^t(x,u)<\ell$ and $y$ will have a neighbor in $U_0^t$, so (2*) will still hold.  

One can verify that with this slight modification all of the previous analysis we did in proving the claims within the proof of Proposition~\ref{P-algLow} continues to hold. It remains to address the two points in the proof of Proposition~\ref{P-algLow} where we required $\gamma=\f{3}{2}$, namely Claim~\ref{Cl-O} and Claim~\ref{Cl-OU}.

\begin{claim}\label{Cl-O'}
	If $e^{t-1}$ is an $O$ move, then Max can play so that $G^t$ is $\gam$-good.
\end{claim}
\begin{proof}
	Max reacts as he did in Claim~\ref{Cl-O}.  Observe that no vertices are added to $\tilde{U}_1^{t}$ or $\tilde{V}_1^{t}$. Indeed, the new vertices are within distance $2<\ell$ of $u$ and $v$, so they will both be added to $U_0^{t}\cup V_0^{t}$.  In particular, $\Delta(|U_1^{t}|)=\Delta(|V_1^t|)=0$, and the remaining analysis is straightforward.
\end{proof}

In order to deal with $AU$ moves putting the game into State OU, we will need the following result showing that $|U_1^t|$ is small.

\begin{claim}\label{Cl-Rep}
	For $k\ge 3$, $|U_1^t|\le 4k^{-1}|U^t|$.
\end{claim}
\begin{proof}
	For each $x\in U_1^t$, let $u_x$ denote a vertex that $x$ is the representative for, and let $P_x$ denote the set of vertices that make up a shortest path from $x$ to $u_x$.  We claim that $P_x$ and $P_y$ are disjoint if $x\ne y$.  Indeed, let $w\in P_x\cap P_y$.  If $d^t(w,x)<d^t(w,y)$, then \[d^t(u_y,w)+d^t(w,y)+d^t(y,u)=d^t(u_y,u)\le d^t(u_y,w)+d^t(w,x)+d^t(x,u).\] Since $d^t(y,u)=d^t(x,u)=4$ this implies that $d^t(w,y)\le d^t(w,x)$, a contradiction.  By using a symmetric argument we see that we must have $d^t(w,x)=d^t(w,y)$.  If we had, say, $x<y$ in the ordering of $U^t$, then $y$ could not be the representative for $u_y$ since $x$ also satisfies properties (1) and (2) for being a representative for $u_y$, meaning that $y$ is not the minimal vertex satisfying these properties.  A similar result occurs if $x>y$.  We conclude that the only way $P_x\cap P_y$ could be non-empty is if $x=y$.
	
	For each $x\in U_1^t$, we observe that the number of vertices in $P_x\cap U^t$ is \[\f{d^t(u_x,x)}{2}+1=\f{\ell-4}{2}+1=\f{\ell}{2}-1\ge \f{k}{4},\] and none of these vertices appear in any other $P_y$ for $x\ne y\in U_1^t$.  Since we can associate to each $x\in U_1^t$ a set of at least $k/4$ elements of $U^t$ without any element of $U^t$ appearing in more than one set, we must have $|U_1^t|\le 4k^{-1} |U^t|$.
\end{proof}

\begin{claim}\label{Cl-OU'}
	If $e^{t-1}$ is an $AU$ move putting the game into State OU, then Max can play so that $G^t$ is $\gam_k$-good.
\end{claim}
\begin{proof}
	Max reacts as in the proof of Claim~\ref{Cl-OU}.  As mentioned before, essentially the same proof used to prove Claim~\ref{Cl-OU} can be used here to show that $G^t$ satisfies (1*), (2*) and (3*), so it remains to verify (4*). By definition of State OU, we have $V_1^t=\emptyset$ and $|U^{t-1}|>\gam_k |V^{t-1}|+\delta$.  The latter implies that \[|V^t|=|V^{t-1}|+1< \rec{\gam_k}|U^{t-1}|-\rec{\gam_k}\delta+1=\rec{\gam_k}|U^t|-\rec{\gam_k}\delta+1.\]  Combining these observations with Claim~\ref{Cl-Rep} gives \begin{align*} b_U^t+b_V^t&=|U_1^t|+(1-\gam_k)(|U^t|+|V^t|)-2\delta \\ 
	&\le 4k^{-1} |U^t|+(1-\gam_k)\left(|U^t|+\rec{\gam_k}|U^t|-\rec{\gam_k}\delta+1\right)-2\delta\\ &=\left(-\gam_k+4k^{-1}+\rec{\gam_k}\r)|U^t|-\left(1+\rec{\gam_k}\right)\delta+(1-\gam_k)\\ 
	&=-\left(1+\rec{\gam_k}\right)\delta+(1-\gam_k),\end{align*}
	with the last equality coming from the fact that $\gam_k$ is a root of $-x^2+4k^{-1} x+1$.  It is not too hard to see that the remaining value is at most $-2$.  This shows that (4*) is maintained in this case, completing the proof.
\end{proof}
	
	To finish the proof, observe that Lemma~\ref{L-AltPart} implies that $e^{t-1}$ must be an $I,\ O,\ AU$, or $AV$ type move.  The claims we have proven here together with our work from Proposition~\ref{P-algLow} shows, provided $G^{t-1}$ contains an isolated vertex, that Max can play so that $G^t$ is $\gam_k$-good regardless of what type of move $e^{t-1}$ is, proving the statement.
\end{proof}
\begin{proof}[Proof of the lower bound of Theorem~\ref{T-gen}]
	Max follows the strategy of Proposition~\ref{P-lowAlg2} until there are no isolated vertices remaining, at which point he follows the strategy of Lemma~\ref{L-Forever}, and one can easily verify that the statement and proof of Lemma~\ref{L-Forever} continues to hold with our newly defined sets.  Using the same sort of reasoning as before, one can show $e(G^\infty)\ge\f{\gam_k}{(1+\gam_k)^2}n^2+o(n^2)$.  Note that
	\[\f{\gam_k}{(1+\gam_k)^2}=\f{k}{8}\left(\sqrt{k^2+4}-k\r)\ge \left(\quart-\f{1}{4k^2}\right),\]
	giving the desired result.
\end{proof}

\section{The Upper Bound of Theorem~\ref{T-gen}}\label{S-Up}
Throughout this section we will consider the $\c{C}_{2k-1}$-saturation game, so that the smallest odd cycle that can be made is a $C_{2k+1}$, and we will always have $k\ge 5$.  We again let $e^1=uv$.

The proof of Theorem~\ref{T-5} shows that Mini has no strategy in the $\c{C}_{2k-1}$-saturation game that guarantees the creation of any odd cycles.  On the other hand, the fact that $\sat_g(\c{C}_\infty;n)=\floor{\quart n^2}$ shows that if Mini does not attempt to make any odd cycles, then Max can force the game to end with a complete balanced bipartite graph.  Thus in order to get any non-trivial upper bound on $\sat_g(\c{C}_{2k-1};n)$, Mini must attempt to construct odd cycles, while also making sure that the game does not end as a balanced bipartite graph if she fails to construct any odd cycles.

Because of the obstructions mentioned above, the proof of the upper bound of Theorem~\ref{T-gen} needs some preparations.  The main idea of the proof is that Mini will maintain a number of long, disjoint paths in $G^t$, and then eventually either Mini will be able to join these paths together and create many odd cycles, or the graph will be ``almost'' bipartite with one side significantly larger than the other.

\subsection{Paths}\leavevmode

We wish to define a special set of disjoint paths $P^t$ in $G^t$, with each path having $v$ as one of its endpoints.  To construct this set, we first need to establish some notation. We say that $xy$ is an isolated edge if $d(x)=d(y)=1$.  Let $C_x^t$ denote the connected component containing $x$ in $G^t$.  We say that $C_x^t$ is good if $v\notin C_x^t$ and if $C_x^t$ is either an isolated vertex or an isolated edge.  If $p$ is a path in $G^t$, let $\ol{C}_p^t$ denote the connected component in $G^t-\{v\}$ containing $p-\{v\}$.   

We start with $P^1=\{uv\}$, and inductively we define $P^t$ based on the following procedure.
\begin{itemize}
	\item[Step 0.] Set $P^{t}:=P^{t-1}$.
	
	\item[Step 1.] If Mini adds the edge $e^t=xv$ with $C_x^{t-1}$ good, set $P^t:=P^t\cup \{xv\}$.
	
	\item[Step 2.] If Mini adds the edge $e^t=xw$ with $C_x^{t-1}$ good, and if there exists some $p\in P^{t-1}$ with $p=w\cdots v$, set $P^t:=(P^t\setminus\{p\})\cup \{xp\}$.
	
	
	\item[Step 3.] While there exists $p\in P^{t}$ and $x\in \ol{C}_p^t$ such that there does not exist a unique path from $x$ to $v$ in $G^t$, set $P^t:=P^t\sm\{p\}$.

\end{itemize}
We first observe that the $\ol{C}_p^t$ components are disjoint from one another.
\begin{lem}\label{L-S3}
	If $p,p'\in P^t$ and $p\ne p'$, then $\ol{C}_p^t\ne \ol{C}_{p'}^t$.
\end{lem}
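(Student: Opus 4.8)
The plan is to argue by contradiction and to exploit the pruning carried out in Step~3 of the construction of $P^t$; indeed, this lemma is essentially the reason Step~3 is present. The first thing I would record is an auxiliary invariant: the paths in $P^t$ are pairwise vertex-disjoint apart from $v$; call this $(\star)$. This should follow by a routine induction on $t$ that runs alongside the construction: it holds for $P^1=\{uv\}$; Steps~0 and~3 only copy or delete paths; and in Steps~1 and~2 the path that is newly created is $xv$ or $xp$ where $C_x^{t-1}$ is good, so $x$ lies in a component of $G^{t-1}$ avoiding $v$ — hence avoiding every path of $P^{t-1}$ — and the new path therefore meets the other paths only where $p$ did (or not at all). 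When $e^t$ triggers neither Step~1 nor Step~2, $P^t$ arises from $P^{t-1}$ purely by Step~3 deletions, so $(\star)$ persists. From $(\star)$ I get, for distinct $p,p'\in P^t$, that $V(p)\cap V(p')=\{v\}$, hence $E(p)\cap E(p')=\emptyset$, and that each path has a well-defined endpoint $\ne v$.

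Assuming the lemma fails, fix $p\ne p'$ in $P^t$ with $\ol{C}_p^t=\ol{C}_{p'}^t=:C$. Then $p-\{v\}$ and $p'-\{v\}$ both lie in $C$, and by $(\star)$ their non-$v$ endpoints are distinct; moreover, since $C$ is a component of $G^t-\{v\}$ and (by $(\star)$) the sets $V(p)\setminus\{v\}$ and $V(p')\setminus\{v\}$ are disjoint, I can pick a shortest path $Q$ in $G^t-\{v\}$ between these two sets, running from some $a\in V(p)\setminus\{v\}$ to some $b\in V(p')\setminus\{v\}$, with $a\ne b$ and with $Q$ internally disjoint from $V(p)\cup V(p')$. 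Gluing the $v$–$a$ subpath of $p$, then $Q$, then the $b$–$v$ subpath of $p'$ yields a closed walk $W$ through $v$; using $(\star)$ to rule out every overlap except at $a$, $b$, and $v$, one checks $W$ revisits no vertex other than $v$, so, having at least the three distinct vertices $v,a,b$, it is a cycle of $G^t$ through $v$.

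To finish: $a\in V(p)\setminus\{v\}\subseteq\ol{C}_p^t$, and the two arcs of $W$ are two distinct $a$–$v$ paths in $G^t$ — one contains the edge of $p$ incident to $v$, the other the edge of $p'$ incident to $v$, and these differ since $E(p)\cap E(p')=\emptyset$. Hence $\ol{C}_p^t$ contains a vertex with more than one path to $v$, so $p$ would have been removed in Step~3, a contradiction. I expect the only genuinely fiddly part to be the bookkeeping for $(\star)$ (in particular checking it survives the iterations of Step~3 and a Step~2 move), together with the careful but elementary verification that $W$ is an honest cycle; everything else is immediate from the definitions.
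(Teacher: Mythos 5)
Your proof is correct, but it takes a genuinely different route from the paper's. The paper proves the lemma by induction on the statement itself: assuming the components were distinct at time $t-1$, it splits into the case where both $p$ and $p'$ survive from $P^{t-1}$ (then $e^t$ must have merged their components, giving some vertex a second path to $v$, so Step 3 would have deleted $p$) and the case where one of them was just created by Step 1 or 2 (then one checks directly that $e^t$ cannot touch $\ol{C}_{p'}^{t-1}$, so the two components remain distinct). You instead induct on the stronger disjointness invariant $(\star)$ — which is indeed independent of the lemma, since it only needs that a good component avoids the component of $v$, which contains every path of $P^{t-1}$ — and then derive the lemma in one shot: two paths lying in a common component of $G^t-\{v\}$ can be joined by a path avoiding $v$, producing a cycle through $v$ and hence a vertex of $\ol{C}_p^t$ with two distinct paths to $v$, contradicting the terminating condition of Step 3. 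Both arguments ultimately rest on Step 3; yours trades the paper's case analysis on the provenance of $p$ and $p'$ for the invariant $(\star)$ plus the connecting-path construction, and has the advantage of not needing to track which path is new at time $t$. The details you flagged do check out: a shortest path between the two disjoint sets is internally disjoint from both, and the two arcs of the resulting cycle are distinct $a$--$v$ paths because $E(p)\cap E(p')=\emptyset$.
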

\begin{proof}
	This certainly holds for $t=1$, so assume this holds up to time $t$.  Assume for contradiction that there exist $p,p'\in P^t$ such that $p\ne p$ and $\ol{C}_p^t=\ol{C}_{p'}^t$. If $p,p'\in P^{t-1}$, then we had $\ol{C}_p^{t-1}\ne \ol{C}_{p'}^{t-1}$ by our inductive hypothesis, which means we must have $e^t=xy$ with $x\in \ol{C}_p^{t-1},\ y\in \ol{C}_{p'}^{t-1}$.  But this implies that $x\in \ol{C}_p^t$ has two paths from itself to $v$, namely the path it had in $G^{t-1}$ and the path from $y$ to $v$ in $G^{t-1}$ together with the edge $xy$.  Thus $p\notin P^t$ by Step 3, a contradiction.
	
	Thus we must have, say, $p\notin P^{t-1}$.  The only way we could have $p\in P^t$ then is if $e^t=xw$ with $C_x^{t-1}$ good and either $w=v$ or $w$ the endpoint of some $p''\in P^{t-1}$.  Observe in either case that $p$ is the only new path added to $P^t$ (Step 2 can not be applied twice since our inductive hypothesis shows that there exists at most one path with $w$ as an endpoint).  Thus we must have $p'\in P^{t-1}$.  Note that $\ol{C}_{p'}^{t-1}\ne \ol{C}_{p''}^{t-1}$ by our inductive hypothesis, and that $\ol{C}_{p'}^{t-1}\ne C_x^{t-1}$ since $p'$ contains $v$ while $C_x^{t-1}$ does not since it is good.  Thus $e^t$ does not involve any vertex of $\ol{C}_{p'}^{t-1}$, and in particular $\ol{C}_{p'}^t=\ol{C}_{p'}^{t-1}\ne \ol{C}_{p''}^{t-1}\cup C_x^{t-1}\cup\{xw\}=\ol{C}_{p}^t$ as desired.
\end{proof}

Another key point with this procedure is that Step 3 does not ``interfere'' with Steps 1 and 2.
\begin{lem}\label{L-Interfere}
	For any $t$, if Step 1 or 2 adds the path $p$ to $P^t$, then $p\in P^t$.  That is, $p$ is not removed by Step 3.
\end{lem}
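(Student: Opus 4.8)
The plan is to hang everything on the observation that Step~3 is the \emph{last} step in the construction of $P^{t-1}$, so upon its termination we automatically know the invariant
\[(\star)\qquad\text{for every }p'\in P^{t-1}\text{ and every }x\in\ol{C}_{p'}^{t-1}\text{, there is a unique path from }x\text{ to }v\text{ in }G^{t-1};\]
this needs no induction — it is just the negation of the while-condition in Step~3. So I would fix $t$ and the path $p$ that Step~1 or Step~2 appends to $P^t$, and prove directly that the analogous statement holds for $p$ with respect to $G^t$: every vertex of $\ol{C}_p^t$ has a unique path to $v$ in $G^t$. By the defining condition of Step~3 this is exactly what it means for $p$ not to be deleted. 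The argument splits according to whether $p$ was created by Step~1 or by Step~2.

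If $p$ comes from Step~1, then $e^t=xv$ with $C_x^{t-1}$ good. Since $e^t$ is incident to $v$, deleting $v$ removes this edge, so $G^t-\{v\}=G^{t-1}-\{v\}$ and hence $\ol{C}_p^t=C_x^{t-1}$, which is an isolated vertex or an isolated edge of $G^{t-1}$. In $G^t$ every vertex of $C_x^{t-1}$ other than $x$ still has degree $1$, while $x$ is adjacent only to $v$ and possibly the one other vertex of $C_x^{t-1}$; a short case check then shows each vertex of $C_x^{t-1}$ reaches $v$ by exactly one path (walk inside $C_x^{t-1}$ to $x$, then take $xv$), so Step~3 spares $p$.

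If $p$ comes from Step~2, then $e^t=xw$ with $C_x^{t-1}$ good and $w$ the endpoint other than $v$ of some $p'\in P^{t-1}$, and $p=xp'$. Now $G^t-\{v\}=(G^{t-1}-\{v\})+xw$, and I would first record that $C_x^{t-1}$ and $\ol{C}_{p'}^{t-1}$ are disjoint: $C_x^{t-1}$ avoids $v$ and is an isolated vertex or edge, so if it met $\ol{C}_{p'}^{t-1}$ it would have to contain $w$, forcing either $w=x$ (impossible, $xw$ is an edge) or $w$ to be the other vertex of an isolated edge (impossible, since $p'$ is a path from $w$ to $v$ inside $G^{t-1}$). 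Hence $\ol{C}_p^t=C_x^{t-1}\cup\ol{C}_{p'}^{t-1}\cup\{xw\}$, with $C_x^{t-1}$ a pendant joined to the rest of $G^t$ only through the edge $xw$. The key point is then that any simple path in $G^t$ from a vertex of $\ol{C}_{p'}^{t-1}$ to $v$ must avoid $C_x^{t-1}$ altogether — entering and leaving the pendant would traverse $xw$ twice — and so lies inside $G^{t-1}$. This gives uniqueness of the $v$-path for each vertex of $\ol{C}_{p'}^{t-1}$ directly from $(\star)$; and for a vertex of $C_x^{t-1}$, any $v$-path is forced to walk to $x$ within the pendant, cross $xw$, and then run from $w$ to $v$ without re-entering the pendant, i.e.\ along the unique $G^{t-1}$-path from $w$ to $v$ supplied by $(\star)$ (legitimately, since $w\in\ol{C}_{p'}^{t-1}$). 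Thus every vertex of $\ol{C}_p^t$ has a unique $v$-path in $G^t$, and Step~3 does not remove $p$.

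The only place that requires genuine care — the ``main obstacle'', such as it is — is the bookkeeping in the Step~2 case: correctly identifying $\ol{C}_p^t$ as $C_x^{t-1}\cup\ol{C}_{p'}^{t-1}\cup\{xw\}$, deducing disjointness of the two pieces from the definition of ``good'', and, when arguing that a $v$-path out of the pendant is forced through $x$ and then $xw$, separately handling the two possible shapes of $C_x^{t-1}$ (an isolated vertex versus an isolated edge). Everything else is routine.
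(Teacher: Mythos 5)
Your proof is correct and follows essentially the same route as the paper's: identify $\ol{C}_p^t$ as $C_x^{t-1}\cup\ol{C}_{p'}^{t-1}\cup\{xw\}$ via disjointness, inherit uniqueness of $v$-paths on $\ol{C}_{p'}^{t-1}$ from the termination condition of Step~3 at time $t-1$, and use the pendant/acyclic structure of the good component $C_x^{t-1}$ to get uniqueness for its vertices. The only cosmetic difference is that you work out the Step~1 case in full where the paper dismisses it as "similar."
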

\begin{proof}
	If Step 1 or 2 added the path $p$, then we must have $e^t=xw$ with $C_x^{t-1}$ good.  First consider the case $w\ne v$, which means that $p=xp'$ with $p'=w\cdots v$ a path in $P^{t-1}$.  Observe that $C_x^{t-1}$ and $\ol{C}_{p'}^{t-1}$ are disjoint since $p'$ contains $v$ while $C_x^{t-1}$ does not, and hence $\ol{C}_{p}^t=\ol{C}_{p'}^{t-1}\cup C_x^{t-1}\cup \{xw\}$.
	
	Since $p'$ was not deleted by Step 3, every vertex of $\ol{C}_{p'}^{t-1}$ contains a unique path to $v$ in $G^{t-1}$.  Since no vertex of $C_x^{t-1}$ contains a path to $v$, every vertex of $\ol{C}_{p'}^t$ continues to have a unique path to $v$ in $G^t$.  Moreover, every path from $y\in C_x^{t-1}$ to $v$ in $G^t$ must consist of a path from $y$ to $x$ (possibly the empty path) followed by the unique path from $w$ to $v$.  Since $C_x^{t-1}$ is acyclic, the path from $y$ to $x$ is unique and we conclude that $p$ is not deleted by Step 3.  The proof for the case $w=v$ is similar, and we omit the details.
\end{proof}

We will be primarily interested in the endpoints of the paths of $P^t$.  To this end, let $D_\ell^t$ denote the set of vertices other than $v$ that are the endpoint of a path of length $\ell$ in $P^t$.

\begin{lem}\label{L-DL}
	Let $\ell,t\ge 1$ and assume $x\in D_\ell^t$.
	\begin{enumerate}
		\item[(a)] $d^t(x,v)=\ell$. 
		\item[(b)] $|D_\ell^{t}|-|D_\ell^{t-1}|\ge -2$, and this difference is 0 whenever Max adds an edge to $G^{t-1}$ that involves an isolated vertex of $G^{t-1}$. 
		\item[(c)] For any $t\ge 0$, if $w_1\ne w_2$ are two vertices of $D_k^t$, then choosing $e^{t+1}=w_1w_2$ is a legal move in the $\c{C}_{2k-1}$-saturation game. 
	\end{enumerate}
\end{lem}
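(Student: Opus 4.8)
The plan is to handle the three parts in turn, using throughout a \emph{tree-like} description of the sets $\ol C_p^t$. For $p\in P^t$, the termination condition of Step~3 guarantees that every vertex of $\ol C_p^t$ has a unique path to $v$ in $G^t$, and this forces $\ol C_p^t$ to be acyclic and $v$ to have exactly one neighbour in $\ol C_p^t$ (otherwise some vertex of $\ol C_p^t$ would reach $v$ by two distinct paths), so that $G^t[\ol C_p^t\cup\{v\}]$ is a tree. Part (a) is then immediate: if $x\in D_\ell^t$ then $x\ne v$ is the non-$v$ endpoint of some $p\in P^t$ of length $\ell$, so $x\in\ol C_p^t$ and $p$ is a path from $x$ to $v$ in $G^t$, which by the Step~3 property is the \emph{only} such path; hence $d^t(x,v)=\ell$.

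For part (b), I would first establish that Step~3 deletes at most two paths at time $t$, and deletes none if $e^t$ is incident to $v$. The crucial observation is that a path $p\in P^{t-1}$ can be deleted by Step~3 only if $e^t$ has an endpoint in $\ol C_p^{t-1}$: if it does not, then $\ol C_p^{t-1}$ is still a full connected component of $G^t-\{v\}$ whose induced subgraph (together with its single edge to $v$) is unchanged, so every vertex of it still has a unique path to $v$ in $G^t$. Moreover, by Lemma~\ref{L-Interfere} a path newly inserted by Step~1 or Step~2 is never deleted by Step~3. Since $e^t$ has at most two endpoints and $v$ belongs to no $\ol C_p^{t-1}$, the bound follows. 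Next I would note that if Step~1 or Step~2 fires then Step~3 deletes nothing: there $e^t=xw$ with $C_x^{t-1}$ good (so $x$ lies in no $\ol C$) and $w$ either equal to $v$ or the endpoint of the single path that gets absorbed into the new path, so every other $\ol C_{p'}^{t-1}$ is untouched by $e^t$, and the new path is tree-like. Putting these together: if Step~1 fires, $P^t$ only gains a path of length $1$, so $|D_\ell^t|-|D_\ell^{t-1}|\in\{0,1\}$; if Step~2 fires, one path of some length $\ell'$ is replaced by one of length $\ell'+1$, so $|D_\ell^t|-|D_\ell^{t-1}|\ge-1$; if neither fires, only Step~3 acts and removes at most two paths, so $|D_\ell^t|-|D_\ell^{t-1}|\ge-2$. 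For the final clause, a move of Max does not trigger Steps~1 or~2, and if it involves an isolated vertex of $G^{t-1}$ it merely attaches a leaf to the graph, which creates no new path to $v$ for any vertex, so Step~3 does nothing; hence $P^t=P^{t-1}$ and $D_\ell^t=D_\ell^{t-1}$.

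For part (c), distinct vertices $w_1,w_2\in D_k^t$ are the non-$v$ endpoints of distinct paths $p_1\ne p_2$ of $P^t$, so by Lemma~\ref{L-S3} the components $\ol C_{p_1}^t$ and $\ol C_{p_2}^t$ of $G^t-\{v\}$ are distinct; in particular $w_1w_2\notin G^t$ and every path from $w_1$ to $w_2$ in $G^t$ passes through $v$. Consequently any cycle of $G^t+w_1w_2$ that uses the edge $w_1w_2$ consists of that edge together with a path from $w_1$ to $w_2$ through $v$, which decomposes into a path from $w_1$ to $v$ and a path from $v$ to $w_2$; by part (a) each of these has length at least $k$, so the cycle has length at least $2k+1$. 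Since $G^t$ is already $\c{C}_{2k-1}$-free, $G^t+w_1w_2$ contains no member of $\c{C}_{2k-1}$, so $e^{t+1}=w_1w_2$ is a legal move in the $\c{C}_{2k-1}$-saturation game.

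The main obstacle is part (b): one has to pin down precisely which paths Step~3 can delete --- this is exactly where the tree-like structure of the $\ol C_p$ components and the fact that adding one edge can create a second path to $v$ only for vertices in the components of that edge's endpoints are used --- and then check that these deletions never compound with the path replacements performed by Steps~1 and~2. Given that, parts (a) and (c) are short.
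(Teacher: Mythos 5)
Your proposal is correct and follows essentially the same route as the paper: part (a) from the uniqueness of the path guaranteed by Step~3, part (b) from the observation that $e^t$ meets at most two components of $G^{t-1}-\{v\}$ each containing at most one path of $P^{t-1}$ (via Lemma~\ref{L-S3}), and part (c) from the fact that $w_1,w_2$ lie in distinct components of $G^t-\{v\}$ so any new cycle must pass through $v$ and hence has length at least $2k+1$. Your case analysis of which Steps fire is merely a more detailed write-up of the same argument.
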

\begin{proof}
	For (a), by definition of $D_\ell^t$ there exists a path of length $\ell$ from $x$ to $v$, and this is the only path from $x$ to $v$ by Step 3.
	
	For (b), observe that $e^t$ can involve at most two components of $G^{t-1}-\{v\}$, and each component contains at most one path of $P^{t-1}$ by Lemma~\ref{L-S3}.  Any path not in these components will not be modified or deleted by the Steps.  Hence $|D_\ell^t|-|D_\ell^{t-1}|\ge -2$.  If Max takes an edge involving an isolated vertex, then none of the Steps for modifying $P^t$ apply and we have $D_\ell^t=D_\ell^{t-1}$.
	
	For (c), let $p_i$ denote the path for which $w_i$ is an endpoint for, noting that $p_1\ne p_2$ since $w_1\ne w_2$ and neither are equal to $v$.  Since $p_1\ne p_2$, $w_1$ and $w_2$ lie in different components of $G^t-\{v\}$ by Lemma~\ref{L-S3}.  Thus if the edge $w_1w_2$ created a forbidden cycle it would have to involve the vertex $v$.  Since $d^t(w_i,v)=k$ for $i=1,2$ by part (a), the smallest cycle that could be formed is a $C_{2k+1}$, which is allowed.
\end{proof}

\subsection{Phases and Phase Transitions}\leavevmode

For the rest of the section we assume that $t$ is even.  We wish to describe each $G^t$ as belonging to a certain ``Phase'' which will determine how Mini will play.  To do this we will need some additional definitions.

Set $U^0:=\emptyset$ and $V^0:=\{v\}$.  Assume  $e^t=xy$.  If $x\in V^{t-1}$ and $y$ is an isolated vertex, then $U^t:=U^{t-1}\cup \{y\},\ V^t:=V^{t-1}$.  If $x\in V^{t-1}$ and $y$ is in an isolated edge $yz$ in $G^{t-1}$, then $U^t:=U^{t-1}\cup \{y\},\ V^t:=V^{t-1}\cup \{z\}$.  If $x\in U^{t-1}$ we define $U^t$ and $V^t$ analogously.  For any other case of $e^t$, set $U^t:=U^{t-1},\ V^t:=V^{t-1}$.  

The idea behind these definitions is that for most of the game, Mini will try and make it so that $G^t$ consists only of isolated vertices, isolated edges, and a bipartite component containing $v$.  If she achieves this for all $t\le t'$, then $U^{t'}\cup V^{t'}$ defines a bipartition for the component containing $v$.  However, at some point the graph will likely cease to have these properties, in which case $U^t\cup V^t$ will serve as an ``approximate'' bipartition.  Another feature of these sets is that they are compatible with the $D_\ell^t$ sets.

\begin{lem}\label{L-PV}
	If $x\in D_\ell^t$ for some $t$ with $\ell$ odd, then $x\in U^{t'}$ for all $t'\ge t$.  If $x\in D_\ell^t$ for some $t$ with $\ell$ even, then $x\in V^{t'}$ for all $t'\ge t$.
\end{lem}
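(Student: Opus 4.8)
The plan is to prove a slightly stronger, purely structural statement and read the lemma off from it. First observe that the recursive definition of the sets $U^t,V^t$ only ever adds vertices, so $U^{t-1}\subseteq U^t$ and $V^{t-1}\subseteq V^t$ for every $t$; hence it is enough to show that $x\in D_\ell^t$ with $\ell$ odd implies $x\in U^t$, and $x\in D_\ell^t$ with $\ell$ even implies $x\in V^t$, after which the claim for all $t'\ge t$ follows by monotonicity. The strengthening I would prove, by induction on $t$, is the following colouring invariant: for every path $p\in P^t$, writing $p$ as $v=w_0,w_1,\dots,w_m$ with $w_m\ne v$ its far endpoint, we have $w_i\in V^t$ for $i$ even and $w_i\in U^t$ for $i$ odd. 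Taking $i=m$, and recalling that $x\in D_\ell^t$ is exactly the statement that $x$ is the far endpoint of a length-$\ell$ path of $P^t$, this yields the lemma.

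For the base case $t=1$ we have $P^1=\{uv\}$, and since $e^1=uv$ with $v\in V^0$ and $u$ isolated in $G^0$, the defining rule gives $U^1=\{u\}$ and $V^1=\{v\}$, so the single path $v,u$ is coloured correctly. For the inductive step, the key observation is that every path of $P^t$ arises in one of two ways: either it is a path of $P^{t-1}$ carried over by Step~0 and not deleted by Step~3, or it is a new path produced this turn by Step~1 or Step~2, of which there is at most one (Steps~1 and 2 are mutually exclusive, and at most one path has any given vertex $w$ as an endpoint; cf.\ the proof of Lemma~\ref{L-S3}). Step~3 only deletes paths and so cannot spoil the colouring of anything that remains, and a carried-over path is handled at once by the induction hypothesis together with $U^{t-1}\subseteq U^t$ and $V^{t-1}\subseteq V^t$. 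So all the work is in the new path.

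For the new path we have $e^t=xw$ with $C_x^{t-1}$ good, so $x$ is either an isolated vertex of $G^{t-1}$ or lies in an isolated edge $xx'$ of $G^{t-1}$; the new path is $xp$, where $p=w\cdots v$ is a path of $P^{t-1}$ of some length $\ell'$ (with $p$ trivial, i.e.\ $w=v$ and $\ell'=0$, in the Step~1 case), so $xp$ has length $\ell'+1$. Applying the induction hypothesis to $p$, the vertex $w$ lies in $V^{t-1}$ when $\ell'$ is even and in $U^{t-1}$ when $\ell'$ is odd (recall $v\in V^{t-1}$ always). The heart of the matter is that this is precisely the configuration that activates the defining rule for $U^t,V^t$, with $w$ in the role of the endpoint lying in $U^{t-1}\cup V^{t-1}$ and $x$ in the role of the freshly attached isolated vertex (and $x'$ in the role of $z$ when $C_x^{t-1}$ is an isolated edge): the rule puts $x$ into $U^t$ when $w\in V^{t-1}$ and into $V^t$ when $w\in U^{t-1}$. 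Hence $x$, the far endpoint of the length-$(\ell'+1)$ path $xp$, lands in $U^t$ exactly when $\ell'+1$ is odd and in $V^t$ exactly when $\ell'+1$ is even. The remaining vertices of $xp$ are those of $p$, occupying the same positions along $xp$ as along $p$, so they keep the colouring guaranteed by the induction hypothesis and monotonicity, closing the induction.

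The one genuinely delicate step is the bookkeeping in the third paragraph: one must check that ``$C_x^{t-1}$ good'' is exactly the hypothesis triggering the relevant clause in the definition of $U^t,V^t$, that $w$ does lie in $U^{t-1}\cup V^{t-1}$ so that clause applies (the sole place the colouring hypothesis is used), and that the parity of $\ell'+1$ matches which of $U^t,V^t$ absorbs $x$. It is also worth invoking Lemma~\ref{L-Interfere} to be sure a path added by Step~1 or 2 is not immediately removed by Step~3, so that the case split on how a path of $P^t$ arises is genuinely exhaustive. No further case analysis is needed; in particular the distance identity of Lemma~\ref{L-DL}(a) is not required here, since the invariant is phrased in terms of position along the path.
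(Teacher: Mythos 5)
Your proof is correct and rests on the same key observation as the paper's: the clause defining $U^t,V^t$ fires exactly when Step 1 or 2 extends a path by an isolated vertex or isolated edge, and the parity of the newly attached endpoint matches the side it is assigned to. The only difference is organizational — you induct on $t$ with a strengthened whole-path colouring invariant, whereas the paper inducts on $\ell$ after using monotonicity to reduce to the minimal $t$ with $x\in D_\ell^t$; both come down to the same one-step verification.
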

\begin{proof}
	Note that the sets $U^t$ and $V^t$ never lose elements, so it is enough to consider the case $t'=t$ and $t$ chosen to be the minimal value such that $x\in D_\ell^t$.  First consider the case $\ell=1$.  By the Steps used to define $P^t$, having $x\in D_1^t$ and $t$ minimal implies that $e^t=xv$ with $x$ an isolated vertex or part of an isolated edge of $G^{t-1}$.  Since $v\in V^{t-1}$, this implies that $x\in U^t$, proving the statement for $\ell=1$.
	
	Now inductively assume that we have proven the statement up to $\ell>1$, and for concreteness we will assume that $\ell$ is odd.  Again by the Steps, having $x\in D_\ell^t$ implies that $e^t=xy$ for some $y\in D_{\ell-1}^{t-1}$ and that $x$ is an isolated vertex or part of an isolated edge of $G^{t-1}$. By our inductive hypothesis we have $y\in V^{t-1}$, and hence $x\in U^t$ as desired. 
\end{proof}

Let $i^t$ denote the number of isolated vertices in $G^t$ and let $c=(1000k^2)^{-1}$. We will say that $G^t$ is in Phase $\ell$ for some $-1\le \ell\le k$ based on the following set of rules.
\begin{itemize}
\item $G^0$ is in Phase 0.   

\item If $G^{t-2}$ is in Phase 0, $|D_1^t|\ge (\f{1}{9} +9c)n$, $|D_2^t|=0$, $||U^t|-|V^t||< cn$, and $\half n\le i^t\le (\half+4c)n$, then $G^t$ is in Phase 1. 

\item If $G^{t-2}$ is in Phase 1, $|D_2^t|\ge \f{1}{9} n$, $||U^t|-|V^t||< cn$,  and $i^t\ge (\f{5}{18}-9c)n$, then $G^t$ is in Phase 2.    

\item For $2\le \ell<k$, if $G^{t-2}$ is in Phase $\ell$, $|D_{\ell+1}^t|\ge(9(k-\ell-1)+4)cn$, $||U^t|-|V^t||< cn$, and $i^t\ge (8(k-\ell-1)+\sum_{j=\ell+1}^k27(k-j))cn$, then $G^t$ is in Phase $\ell+1$.   

\item If $G^{t-2}$ is in Phase $\ell$ with $\ell<k$ and $||U^t|-|V^t||\ge cn$, then $G^{t}$ is in Phase $-1$.    

\item If $G^{t-2}$ is in Phase $\ell$ and if $G^t$ satisfies none of the above situations, then $G^t$ is in Phase $\ell$.
\end{itemize}

Intuitively, these rules say that if we ever have $||U^t|-|V^t||$ large, then the game enters Phase $-1$ and never leaves it.  Otherwise the game goes from Phase $\ell$ to $\ell+1$ if $G^{t}$ contains many isolated vertices and many paths of length $\ell+1$ with $v$ as an endpoint.  We note that to leave Phase 0 we additionally require there to not be too many isolated vertices in $G^t$.  Our goal will be to show that Mini can play so that the game eventually enters either Phase $-1$ or Phase $k$, and that once the game reaches one of these Phases that Mini can play so that $e(G^\infty)$ is small.

\subsection{The Beginning of the Game}\leavevmode

We will say that a path in $U^t\cup V^t$ is alternating if the vertices  in the path alternate being in $U^t$ and $V^t$, and we define $d_a^t(x,y)$ for $x,y\in U^t\cup V^t$ to be the length of the shortest alternating path in $U^t\cup V^t$ from $x$ to $y$, with this value being infinite if no such path exists. We record some observations about this definition as a lemma.

\begin{lem}\label{L-Alt}
	Let $t,\ell\ge 1$.
	\begin{enumerate}
		\item[(a)] $d_a^t(x,v)$ is even if $x\in V^t$ and odd if $x\in U^t$.
		
		\item[(b)] If $x\in D_\ell^t$, then $d_a^t(x,v)=d^t(x,v)$.
	\end{enumerate}
\end{lem}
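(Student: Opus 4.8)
Both parts follow by tracking how the alternating distance interacts with the sets $U^t$ and $V^t$, which were defined so as to be ``compatible'' with a proper $2$-coloring whenever the relevant component is bipartite. For part (a), observe that the definition of an alternating path forces consecutive vertices to alternate membership in $U^t$ and $V^t$; hence a path of length $m$ from $v$ (which lies in $V^t$) ends at a vertex in $V^t$ if $m$ is even and in $U^t$ if $m$ is odd. Thus \emph{every} alternating path from $v$ to $x$ has length of a fixed parity — even if $x\in V^t$, odd if $x\in U^t$ — and in particular the shortest one, $d_a^t(x,v)$, has this parity. (If no alternating path exists, the claim is vacuous, since then $d_a^t(x,v)=\infty$ and there is nothing to check; one might instead restrict to $x$ in the component of $v$, but the statement as phrased is fine under the convention that parity statements about $\infty$ are vacuous, or one simply notes that the cases of interest are those where $x$ is reachable.)

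For part (b), fix $x\in D_\ell^t$. By Lemma~\ref{L-DL}(a) we have $d^t(x,v)=\ell$, witnessed by the unique path $p=x\cdots v$ in $P^t$; since any alternating path is in particular a path, we immediately get $d_a^t(x,v)\ge d^t(x,v)=\ell$. For the reverse inequality it suffices to show that the path $p$ itself is alternating in $U^t\cup V^t$. The plan is to induct on $\ell$ along the Steps defining $P^t$, mirroring the proof of Lemma~\ref{L-PV}: the base case $\ell=1$ is the edge $xv$ with $x$ added to $U^t$, which is an alternating path of length $1$; for the inductive step, $p=xp'$ where $p'=y\cdots v\in P^{t-1}$ has $y\in D_{\ell-1}^{t-1}$, so by induction $p'$ is alternating, and the vertex $x$ was placed in the opposite part from $y$ by the defining rule for $U^t,V^t$ (this is exactly the computation in Lemma~\ref{L-PV}), so appending $x$ keeps the path alternating. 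Hence $d_a^t(x,v)\le \ell$, giving equality.

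The only subtle point — and the step I would be most careful about — is making sure the inductive bookkeeping in part (b) genuinely matches the Steps: one must use that when a path $p$ survives in $P^t$ it was either already present in $P^{t-1}$ (in which case $U,V$ only grew, so alternation of $p$ is inherited) or was freshly created by Step 1 or Step 2 from a surviving shorter path, and Lemma~\ref{L-Interfere} guarantees Step 3 does not then delete it. Everything else is a direct unwinding of definitions, so I would keep the write-up short and lean on Lemmas~\ref{L-PV} and~\ref{L-DL} rather than re-deriving their content.
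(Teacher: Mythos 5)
Your proposal is correct and follows essentially the same route as the paper, which notes that (a) is immediate from the definitions and that (b) follows because the unique path guaranteed by Step 3 is alternating by Lemma~\ref{L-PV}. The only difference is that you spell out the induction along the Steps that the paper delegates entirely to the citation of Lemma~\ref{L-PV}.
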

\begin{proof}
	The proof of (a) is immediate from our definitions.  For (b), every $x\in D_\ell^t$ has a unique path (of length $\ell$) from itself to $v$ by Step 3.  Further, this path is alternating by Lemma~\ref{L-PV}, so we conclude the result.
\end{proof} 

We now describe the kind of structure that Mini tries to preserve during the beginning of the game.  If $t$ is even and $G^t$ is in Phase $\ell$ with $0\le \ell<k$, we say that $G^t$ is $\ell$-nice if it satisfies the following three conditions.
\begin{itemize}
	\item[(1-$\ell$)] $G^t$ contains exactly one non-trivial connected component whose vertices are $U^t\cup V^t$.
	\item[(2-$\ell$)] $d_a^t(x,v)\le \ell+2$ for all $x\in U^t\cup V^t$.
	\item[(3-$\ell$)]  $i^t\ge 3$, and if $\ell\ne 0$ then $|D_\ell^t|\ge 3$.
\end{itemize}

\begin{prop}\label{P-Beg}
	Let $k\ge 5$.  Mini can play in the $\c{C}_{2k-1}$-saturation game so that, whenever $G^{t}$ is in Phase $\ell$ for some even $t\ge 0$ and $0\le \ell<k$, $G^t$ is $\ell$-nice.
\end{prop}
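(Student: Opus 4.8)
The plan is to argue by induction on the even parameter $t$, at each step specifying Mini's reply to Max's move $e^{t-1}$ and checking that the three defining conditions of $\ell'$-niceness hold for $G^t$, where $\ell'$ is the Phase of $G^t$. By the Phase rules $\ell'\in\{\ell,\ell+1\}$ when $G^{t-2}$ is in Phase $\ell$ with $0\le\ell<k$, while Phases $-1$ and $k$ are absorbing and carry no conclusion, so this is the only case to treat; the base case $t=0$ is immediate ($U^0=\emptyset$, $V^0=\{v\}$, $i^0=n$). Since $G^{t-2}$ is $\ell$-nice it has no isolated edge, so $e^{t-1}$ is of one of a few types: joining two isolated vertices, joining an isolated vertex to the component $U^{t-2}\cup V^{t-2}$, lying inside the component, or joining an isolated vertex to $v$.

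Mini's strategy revolves around a single \emph{productive} move --- pick an endpoint $x$ of a length-$\ell$ path of $P^{t-1}$ (take $x=v$ if $\ell=0$) and a fresh isolated vertex $y$ and play $e^t=xy$, which by Steps~1--2 of the path construction together with Lemmas~\ref{L-PV} and~\ref{L-Interfere} adds $y$ to $D_{\ell+1}^t$ while leaving the component structure untouched --- supplemented by a handful of \emph{repairs}: if $e^{t-1}$ created an isolated edge, Mini absorbs it into the component near $v$ (preferably as the ``$y$'' of a productive move); if $e^{t-1}$ pushed some vertex $y$ to alternating distance $\ell+3$ from $v$ --- which requires Max to attach $y$ to a vertex already at distance $\ell+2$ --- Mini re-routes $y$ to a vertex at distance $\le\ell+1$ in the opposite part of $U^{t-2}\cup V^{t-2}$; and, when the supply of length-$\ell$ paths has run low, Mini refills it by promoting a shorter path (cascading down, in the worst case, to a Step~1 move $zv$). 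Here one uses, as in the proofs of Lemmas~\ref{L-Bipartite} and~\ref{L-AltPart}, that except possibly in the final phase $\ell=k-1$ an in-component edge between two vertices of equal parity would complete a short forbidden odd cycle, so Max's in-component moves are forced to create (bipartition-preserving) even cycles, which limits the ways his moves can interact with the path structure $P^t$.

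Granting the strategy, verifying (1-$\ell$) and (2-$\ell$) (and their $(\ell+1)$-versions at a transition) is routine. Condition (1-$\ell$) holds because Mini's move is always incident to the component or absorbs the lone transient isolated edge, and the growth of $U^t,V^t$ exactly records absorptions of isolated vertices and edges, so the component stays equal to $U^t\cup V^t$ and unique. Condition (2-$\ell$) holds because Mini keeps $d_a^t(\cdot,v)\le\ell+1$ after her own moves, so after Max's move every alternating distance is $\le\ell+2$ except at most one vertex at $\ell+3$, which the re-routing repair fixes; and a transition to Phase $\ell+1$ relaxes the bound to $\ell+3$. The real content is condition (3-$\ell$): $i^t\ge3$ and, for $\ell\ge1$, $|D_\ell^t|\ge3$. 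For this one strengthens the induction hypothesis to carry explicit lower bounds on $i^t$ and on the sizes of \emph{all} the rungs $|D_1^t|,\dots,|D_\ell^t|$, bounds chosen so that they remain above the Phase-entry thresholds throughout Phase $\ell$. The moving parts are: Lemma~\ref{L-DL}(b), bounding how fast Max can drain $|D_\ell|$ (nothing on moves touching an isolated vertex, at most $2$ otherwise); Lemma~\ref{L-S3}, which keeps the components $\ol{C}_p^t$ disjoint so that one edge of Max removes at most two paths via Step~3; the fact that every path of $P^t$ is created in a Step~1 move $zv$, which eats a fresh neighbour of $v$, so the total number of paths Max can destroy over the \emph{whole} game and the total cost of all of Mini's ladder-building are each $O(n)$; and the choice $c=(1000k^2)^{-1}$, which makes the large bottom rungs $|D_1^t|,|D_2^t|\approx n/9$ big enough to absorb that $O(n)$ damage while the $k$ small top rungs of size $\Theta(cn)$ cost only $O(n/k)$ in total. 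Pushing this through shows Mini can hold every rung --- in particular $D_\ell^t$ --- above $3$, and $i^t$ above its threshold, until the game either moves to Phase $\ell+1$ or drops to Phase $-1$; at a transition to Phase $\ell+1$, condition (3-$(\ell+1)$) is immediate because the transition thresholds force $|D_{\ell+1}^t|,i^t=\Theta(n)\ge3$.

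The main obstacle is precisely this last bookkeeping: running the case analysis over every type of Max move \emph{in tandem} with a quantitative invariant on the whole ladder $|D_1^t|,\dots,|D_\ell^t|$ and on $i^t$, and controlling the chain of Step~3 deletions that any new cycle triggers --- a cycle created either by Max inside the component or by Mini's own re-routing repair --- so as to certify that Mini can keep the ladder refilled from below faster than Max can drain it before the phase changes. A secondary technical nuisance is making the productive and repair moves respect the parities of $U^t,V^t$ in the two end phases $\ell\in\{k-2,k-1\}$, where the component is no longer forced to be bipartite.
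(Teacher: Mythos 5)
Your architecture matches the paper's at a high level: induction on even $t$, a case analysis on the type of Max's move, a ``productive'' response that extends a length-$\ell$ path of $P^{t-1}$ by a fresh isolated vertex, and a re-routing repair when Max attaches an isolated vertex to a vertex at alternating distance $\ell+2$ from $v$. But the quantitative core of your argument is not the paper's, and as written it does not close. You propose to keep every rung $|D_1^t|,\dots,|D_\ell^t|$ above its threshold by ``refilling from below'', absorbing $O(n)$ total damage into rungs of size $n/9$ or $\Theta(cn)$. A phase can a priori last $\Theta(n)$ rounds (each round only removes at least one isolated vertex, and that is the only obvious bound on its duration), and Max can destroy up to two paths per round, so the cumulative damage to a single rung can be of order $2n$ --- far more than $n/9$, let alone $cn=n/(1000k^2)$. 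Refilling does not rescue this: a turn Mini spends promoting a shorter path is a turn not spent growing $|D_{\ell+1}^t|$, so nothing forces the phase to end, and you give no bound on its length. The paper's mechanism is different and is the essential point missing from your plan: Mini's responses are arranged so that (for $\ell$ even) every round whose Max-move is of type $I$ or $AU$ strictly decreases $g^t=|V^t|-|U^t|$, while rounds of type $O$ or $AV$ leave $g^t$ unchanged and increase $|D_{\ell+1}^t|$ by one. Since $|g^t|<cn$ throughout the phase (otherwise the game drops to Phase $-1$, an acceptable absorbing outcome), at most $2cn$ rounds of the first kind occur; hence within $O(cn)$ or $O(n)$ further rounds the thresholds on $|D_{\ell+1}^t|$ and $i^t$ for entering Phase $\ell+1$ are met, and in that bounded window $|D_\ell^t|$ cannot drop below $cn\ge 3$. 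No refilling of $D_\ell$ is needed, used, or (apparently) workable.

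Two further gaps. First, the legality of the re-routing move $xv''$ is a genuine issue --- $x$ is already joined to $v'$, so adding $xv''$ can create cycles through $x$ --- and the paper devotes Claim~\ref{Cl-F} to showing that any odd cycle so created has length at least $2k+3$; you assert the repair without addressing legality, and your appeal to ``bipartition-preserving'' structure is not available, since the component is only approximately bipartite via the bookkeeping sets $U^t,V^t$ and condition (2-$\ell$), not via an actual 2-coloring. Second, you explicitly defer ``the main obstacle'' (running the ladder invariant against every move type) rather than resolving it; since that bookkeeping is the entire content of the proposition, what you have is a plan built on the wrong invariant, not a proof.
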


\begin{proof}
	For any given $t$ we say that the edge $e^{t-1}$ is of type $I$ if it involves two vertices of $U^{t-2}\cup V^{t-2}$, of type $O$ if it involves two isolated vertices of $G^{t-2}$, of  type $AU$ if it involves one isolated vertex of $G^{t-2}$ and one vertex of $V^{t-2}$, of type $AV$ if it involves one isolated vertex of $G^{t-2}$ and one vertex of $U^{t-2}$, and of type $X$ if it is not any of the four types mentioned above.  Mini's strategy is as follows, where we define $D_0^t=\{v\}$ for all $t$ to deal with the case $\ell=0$.
	
	\begin{strat}\label{SS-Strat}
		Let $t$ be such that $G^{t-2}$ is in Phase $\ell$ with $0\le \ell<k$, and assume that Max has just played $e^{t-1}$.  If $|D_\ell^{t-1}|=0$, $i^{t-1}=0$, or if $e^{t-1}$ is an $X$ move, Mini plays arbitrarily, and in this case we will say that Mini has forfeited the game.  
		
		If Mini does not forfeit, let $y\in D_\ell^{t-1}$ and let $z$ be an isolated vertex of $G^{t-1}$.  If $\ell$ is even, Mini plays as follows.
		\begin{itemize}
			\item If $e^{t-1}$ is an $I$ move, Mini plays $yz$. 
			
			\item If $e^{t-1}=xw$ is an $O$ move, Mini plays $xy$.   
			
			\item If $e^{t-1}=xu'$ is an $AV$ move with $u'\in U^{t-2}$ and $x\notin U^{t-2}\cup V^{t-2}$, Mini plays $yz$.
			
			\item Assume $e^{t-1}=xv'$ is an $AU$ move with $v'\in V^{t-2}$ and $x\notin U^{t-2}\cup V^{t-2}$. If $d_a^{t-1}(v',v)\le \ell$, Mini plays $zv$ (essentially skipping her turn).  If $d_a^{t-1}(v',v)>\ell$, let $v'x'v''\cdots v$ be a shortest alternating path from $v'$ to $v$.  Then Mini adds the edge $xv''$ if this is a legal move, otherwise she forfeits and plays arbitrarily.
		\end{itemize}
		The strategy for $\ell$ odd is exactly the same as the strategy for $\ell$ even, except that the roles of $U^t$ and $V^t$ are reversed throughout and that Mini plays $zu$ in order to ``skip her turn'' instead of $zv$.
	\end{strat}
	We note that in the $AU$ case with $\ell$ even and with $d_a^{t-1}(v',v)>\ell\ge 0$, the vertex $v''$ always exists.  Indeed, $d_a^{t-1}(v',v)$ is even by Lemma~\ref{L-Alt} and hence $d_a^{t-1}(v',v)\ge 2$, so $v''$ exists.  A similar result holds with $\ell$ odd.  With this in mind, it is not difficult to see that Mini can always follow Strategy~\ref{SS-Strat} in the $\c{C}_{2k-1}$-saturation game.  We would like to further argue that Mini never has to forfeit the game.
	
	\begin{claim}\label{Cl-F}
		If $G^{t-2}$ is in Phase $\ell$ with $0\le \ell<k$ and is $\ell$-nice, then Mini does not forfeit when using Strategy~\ref{SS-Strat} at time $t$. 
	\end{claim}
	\begin{proof}
		We only prove this when $\ell$ is even, the case when $\ell$ is odd being essentially the same. Condition (1-$\ell$) guarantees that $e^{t-1}$ is not of type $X$, and (3-$\ell$) together with Lemma~\ref{L-DL} guarantees that $|D_\ell^{t-1}|,i^{t-1}\ne 0$ (for $\ell=0$ we use that $|D_0^{t-1}|=1$ for all $t$).  Thus the only potential issue is when, say, $e^{t-1}=xv'$ is an $AU$ move with $d_a^{t-1}(v',v)>\ell$.  Assume that this is the case.
		
		Let $\tilde{G}^t=G^{t-1}\cup \{xv''\}$, and assume that $\tilde{G}^t$ contains an odd cycle $C$ of length less than $2k+1$.  Note that $C$ is not contained in $G^{t-1}$ since we assume that $G^{t-1}$ is a legal state in the $\c{C}_{2k-1}$-saturation game, so $xv''$ must be an edge of $C$.  This implies that $xv'$ is also an edge of $C$ since $x$ has degree two in $\tilde{G}^t$. If $x'$ is not a vertex of $C$, then let $C'$ be $C$ after replacing the edges $xv'$ and $xv''$ with $x'v'$ and $x'v''$.  Then $C'$ is an odd cycle of length less than $2k+1$ in $G^{t-1}$, a contradiction.  
		
		Thus $x'$ must be a vertex of $C$, which means that there exists in $G^{t-1}$ a path from $x'$ to $v'$ and a path from $x'$ to $v''$ that lie in $C$, and exactly one of these paths is of even length.  Any path of even length from $x'$ to $v'$ in $G^{t-1}$ has length at least $2k$, since otherwise this path together with the edge $x'v'$ would create a forbidden odd cycle in $G^{t-1}$.  A similar observation holds for paths of even length from $x'$ to $v''$.  We conclude that $C$ has length at least $2k+3$, which is allowed.
	\end{proof}
	With this established, we will prove our result by induction.  $G^0$ is in Phase 0 and is 0-nice.  From now on we inductively assume that $G^{t-2}$ is in Phase $\ell$ for some $0\le \ell<k$, that $G^{t-2}$ is $\ell$-nice, and further that Mini played according to Strategy~\ref{SS-Strat} for all even $t'<t$ without forfeiting, which we can assume by Claim~\ref{Cl-F}.  Assume that Mini chooses $e^t$ as prescribed by Strategy~\ref{SS-Strat}. We note that it is possible for $G^t$ to not be in Phase $\ell$.
	
	\begin{claim}\label{Cl-2'}
		$G^{t}$ satisfies (1-$\ell$) and (2-$\ell$).
	\end{claim}
	\begin{proof}
		It is not difficult to see that (1-$\ell$) is maintained.  In verifying (2-$\ell$), we only consider the case $\ell$ even, the analysis for the odd case being exactly the same but with the roles of $U^t$ and $V^t$ reversed throughout.  Observe that in this case we have $y\in V^{t-1}$ by Lemma~\ref{L-PV} and that $d_a^t(y,v)=\ell$ by Lemma~\ref{L-Alt}.
		
		If $e^{t-1}$ is an $I$ move, then $d_a^t(z,v)=\ell+1$ since $d_a^t(y,v)=\ell$, and no other distances increase, so (2-$\ell$) is maintained.  
		
		If $e^{t-1}$ is an $O$ move, observe that $xw$ is an isolated edge in $G^{t-1}$.  Because of this and the fact that $y\in V^{t-1}$, we have $x\in U^t$ and $w\in V^t$ by how these sets are defined.   We then have $d_a^t(x,v)=\ell+1$ and $d_a^t(w,v)=\ell+2$, so (2-$\ell$) is maintained. 
		
		If $e^t$ is an $AV$ move, note that $d_a^{t-1}(u',v)$ is odd by Lemma~\ref{L-Alt}, and hence at most $\ell+1$ by (2-$\ell$) and the fact that $\ell+2$ is even.  Thus $d_a^t(x,v)\le \ell+2$, and we also have $d_a^t(z,v)=\ell+1$, so (2-$\ell$) is maintained. 
		
		If $e^t$ is an $AU$ move with $d_a^{t-1}(v',v)\le \ell$ then it is not hard to see that (2-$\ell$) is maintained.  Otherwise we have $d_a^t(x,v)=\ell+1$ and (2-$\ell$) is maintained.
	\end{proof}
	Let $t_\ell$ denote the smallest even value such that $G^{t_\ell}$ is in Phase $\ell$.  Observe that the game can not leave Phase $\ell$ and come back to it at a later time, which means that $G^{t'}$ is in Phase $\ell$ for all even $t'$ with $t_\ell\le t'<t$.  For any even $t'$ with $t_\ell\le t'\le t$, define $g^{t'}=|V^{t'}|-|U^{t'}|$ if $\ell$ is even and $g^{t'}=|U^{t'}|-|V^{t'}|$ if $\ell$ is odd.  Recall the definition $\Delta(X^{t'}):=X^{t'}-X^{t'-2}$ for any relevant $X$, and that we assumed that Mini used Strategy~\ref{SS-Strat} without forfeiting for all even $t'$ with $t_\ell< t'\le t$.
	\begin{claim}\label{Cl-Change}
		Let $t'$ be even with $t_\ell< t'\le t$.  If $\ell>0$ is even, then the following hold.
		\begin{itemize}
			\item  If $e^{t'-1}$ is of type $I$: $\Delta(|D_{\ell+1}^{t'}|)\ge -1,\ \Delta(|D_{\ell}^{t'}|)\ge -3,\ \Delta(g^{t'})=-1,\ \Delta(i^{t'})=-1$.
			
			\item If $e^{t'-1}$ is of type $O$: $\Delta(|D_{\ell+1}^{t'}|)=1,\ \Delta(|D_{\ell}^{t'}|)=-1,\ \Delta(g^{t'})=0$, $\Delta(i^{t'})=-2$.
			
			\item If $e^{t'-1}$ is of type $AV$: $\Delta(|D_{\ell+1}^{t'}|)= 1,\ \Delta(|D_{\ell}^{t'}|)= -1$, $\Delta(g^{t'})= 0$, $\Delta(i^{t'})=-2$.
			
			\item If $e^{t'-1}$ is of type $AU$: $\Delta(|D_{\ell+1}^{t'}|)\ge 0,\ \Delta(|D_{\ell}^{t'}|)=0,\ \Delta(g^{t'})\le -1,\ -1\ge \Delta(i^{t'})\ge -2$.
		\end{itemize}
		The same results hold for $\ell=0$ when one ignores the $\Delta(|D_{\ell}^{t'}|)$ terms.  Analogous results hold for $\ell$ odd by switching the results for $AU$ with those of $AV$.
	\end{claim}
	\begin{proof}
		This is not particularly difficult to verify. In particular one uses Lemma~\ref{L-Interfere} to show that $|D_{\ell+1}^{t'-1}|$ increases after Mini responds to $I,\ O$, and $AU$ moves, and Lemma~\ref{L-DL} to bound the changes in $|D_{\ell}^{t'-2}|$ and $|D_{\ell+1}^{t'-2}|$ after Max plays.  We omit the details.
	\end{proof}
	In order to show that $G^t$ satisfies (3-$\ell$), we will use Claim~\ref{Cl-Change} together with the following claim which shows that $|D_\ell^{t_\ell}|$ and $i^{t_\ell}$ are large.
	\begin{claim}\label{Cl-Init}
		We have the following.
		\begin{itemize}
			\item[(a)] For $\ell\ge 2$ we have $|D_\ell^{t_\ell}|\ge (9(k-\ell)+4)cn$ and $i^{t_\ell}\ge (8(k-\ell)+\sum_{j=\ell}^k 27(k-j))cn$.  
			\item[(b)] For any $\ell$ with $0\le \ell<k$, $G^{t_\ell}$ satisfies (3-$\ell$) when $n$ is sufficiently large.
		\end{itemize}
		
	\end{claim}
	\begin{proof}
		Part (a) is immediate for $\ell>2$ by how we defined our Phases.  To deal with the case $\ell=2$, recall that $c=(1000k^2)^{-1}$.  We have
		\[
			|D_{2}^{t_2}|\ge \rec{9} n\ge \f{9}{1000k} n\ge \f{9k-14}{1000k^2}n=(9(k-2)+4)cn,
		\]
		\[
			i^{t_2}\ge \left(\f{5}{18}-9c\r)n \ge \f{35}{1000}n\ge \f{8k+27k^2}{1000k^2}n\ge (8(k-2)+\sum_{j=2}^k 27(k-j))cn. 
		\]
		
		Part (b) for $\ell\ge 2$ follows from (a).  The statement is true for $\ell=0$ when $n\ge 3$ and is true for $\ell=1$ when $n\ge 27$ since $|D_{1}^{t_1}|,i^{t_1}\ge \rec{9}n$ by the rule for moving from Phase 0 to Phase 1.
	\end{proof}

	\begin{claim}\label{Cl-Turn}
		Let $\ell'$ denote the Phase of $G^t$. If $n$ is sufficiently large, then either $\ell'=-1$, $\ell'=k$, or $G^t$ satisfies (3-$\ell'$).
	\end{claim}
	
	\begin{proof}
		Note that by the way our Phases were defined, we must have $\ell'=-1,\ \ell$, or $\ell+1$.  There is nothing to prove if $\ell'=-1$, so we can assume that $\ell'\ne -1$.  Consider the case that $\ell'=\ell+1$.  If $\ell=k-1$ then there is nothing to prove, and otherwise we have $t=t_{\ell'}$ since $G^{t-2}$ was in Phase $\ell$, which means that $G^t$ satisfies (3-$\ell'$) by Claim~\ref{Cl-Init}.   We can thus assume that $\ell'=\ell$.
		
		Assume first that $\ell$ is even.  For any even $t'$ with $t_\ell<t'\le t$, let $r_1^{t'}$ denote the number of even $t''$ with $t_\ell <t''\le t'$ such that $e^{t''-1}$ was of type $I$ or $AU$, and similarly define $r_2^{t'}$ to correspond to the number of $O$ and $AV$ moves.  Observe that we always have $r_1^{t'}+r_2^{t'}=\half (t'-t_\ell)$ since we assume that Mini has used Strategy~\ref{SS-Strat} up to time $t'$,  and hence Max never played an $X$ move by Claim~\ref{Cl-F}. By Claim~\ref{Cl-Change} we have that $g^{t'}\le g^{t_\ell} -r_1^{t'}$.  Note that $g^{t_\ell}< cn$ and $g^{t'}>-cn$, as otherwise we would have either $G^{t_\ell}$ or $G^{t'}$ in Phase $-1$.  Thus we can assume that $r_1^{t'}\le 2cn$, and hence $r_2^{t'}=\half (t'-t_\ell)-r_1^{t'}\ge \half (t'-t_\ell)-2cn$.  By using this, Claim~\ref{Cl-Change}, and $r_1^{t'}+r_2^{t'}=\half (t'-t_\ell)$, we conclude that
		\begin{equation}\label{E-D1}
			|D_\ell^{t'}|\ge |D_{\ell}^{t_\ell}|-3r_1^{t'}-r_2^{t'}\ge |D_{\ell}^{t_\ell}|-\half (t'-t_\ell)-4cn,
		\end{equation}
		\begin{equation}\label{E-D2}
			|D_{\ell+1}^{t'}|\ge |D_{\ell+1}^{t_\ell}|-r_1^{t'}+r_2^{t'}\ge |D_{\ell+1}^{t_\ell}|+\half (t'-t_\ell)-4cn,
		\end{equation}
		\begin{equation}\label{E-I}
			i^{t'}\ge i^{t_\ell}-2r_1^{t'}-2r_2^{t'}=i^{t_\ell}-(t'-t_\ell),
		\end{equation}
		\begin{equation}\label{E-I2}
			i^{t'}\le i^{t_\ell}-r_1^{t'}-2r_2^{t'}\le i^{t_\ell}-(t'-t_\ell)+4cn.
		\end{equation}
		
		First consider the case $\ell=0$, which implies that $t_0=0,\ |D_1^{t_0}|=0$, and $i^{t_0}=n$.  Observe that using Strategy~\ref{SS-Strat} we have $|D_2^{t'}|=0$ for any $t'$ with $G^{t'}$ in Phase 0.  Let $\tilde{t}$ be an even integer such that $(\f{2}{9}+26c)n\le \tilde{t}\le \half n$, which exists when $n$ is sufficiently large since $\half -(\f{2}{9}+26c)>0$.  If $t'$ is even with $(\f{2}{9}+26c)n\le t'\le \min(t,\tilde{t})\le \half n$, then Equations~\eqref{E-D2}, \eqref{E-I}, and \eqref{E-I2} imply that $|D_1^{t'}|\ge (\rec{9}+9c)n$ and $\half n\le i^{t'}\le (\half n+4c)n$.  This implies that $G^{t'}$ is in Phase 1 since $G^{t'-2}$ was in Phase 0.  But $G^{t'}$ was in Phase 0 since $t'\le t$ and we assumed $G^t$ was in Phase 0, a contradiction.  Thus no such $t'$ exists, and in particular we must have $t<(\f{2}{9}+26c)n\le \half n$.  Equation~\eqref{E-I} then implies that $i^t\ge \half n\ge 3$ for $n\ge 6$, so $G^t$ satisfies (3-0).
		
		Now assume $\ell>0$ is even. Let $\tilde{t}$ be the smallest even integer such that $\tilde{t}\ge (16+18(k-\ell-1))cn+t_\ell$.  If $\tilde{t}\le t$, then Claim~\ref{Cl-Init} together with Equations~\eqref{E-D2} and \eqref{E-I} imply that
		\[
			|D_{\ell+1}^{\tilde{t}}|\ge \half (16+18(k-\ell-1))cn-4cn= (4+9(k-\ell-1))cn,
		\]
		and
		\begin{align*}
		i^{\tilde{t}}&\ge (8(k-\ell)+\sum_{j=\ell}^k 27(k-j))cn-(16+18(k-\ell-1))cn-2\\ &= (8(k-\ell-1)+\sum_{j=\ell+1}^k 27(k-j)+9(k-\ell)-8)cn-2\\ &\ge (8(k-\ell-1)+\sum_{j=\ell+1}^k 27(k-j))cn+3,
		\end{align*}
		for $n$ such that $cn\ge 5$.  Thus $G^{\tilde{t}}$ is in Phase $\ell+1$, a contradiction, so $t\le (16+18(k-\ell-1))cn+t_\ell$.  Assuming this, one can go through the same calculations as above and conclude that $i^t\ge 3$, and we also have
		\[
			|D_\ell^t|\ge (9(k-\ell)+4)cn-\half \cdot (16+18(k-\ell-1))cn-4cn=cn\ge 3
		\]
		when $n$ is sufficiently large.  
		
		The analysis for $\ell\ge 3$ odd is essentially the same as above after switching the roles of $AU$ and $AV$ when defining $r_1^{t'}$ and $r_2^{t'}$.  The analysis is almost the same for $\ell=1$, except we use $|D_1^{t_1}|\ge \f{1}{9} n+9cn$, $i^{t_1}\ge \half n$, and $\tilde{t}\ge (\f{2}{9}+8c)n+t_1$.
	\end{proof}
	
	By Claim~\ref{Cl-Turn} and Claim~\ref{Cl-2'}, if $G^t$ is in Phase $\ell'$ with $\ell'\ne -1,k$, then $G^t$  satisfies (1-$\ell$), (2-$\ell$), and (3-$\ell'$).  Since (1-$\ell$) and (2-$\ell$) imply (1-$\ell'$) and (2-$\ell'$) when we have $\ell'=\ell$ or $\ell'=\ell+1$, $G^t$ is $\ell'$-nice and we conclude the result by induction.
	
\end{proof}
\subsection{Endgame}\leavevmode
It remains to describe Mini's strategy in Phases $-1$ and $k$, and to argue that with this strategy $G^\infty$ will end up with few edges.

\begin{prop}\label{P-k}
	Assume $G^{t_k}$ is in Phase $k$ in the $\c{C}_{2k-1}$-saturation game with $k\ge 5$ and such that $t_k$  is the minimum value for which this holds.  Then Mini can play so that $e(G^{\infty})\le \quart(1-c^2)n^2+o(n^2)$.
\end{prop}
\begin{proof}
	Mini's strategy is as follows.  Let $t\ge t_k$ be even. If $|D_k^{t-1}|\ge 2$, Mini plays $e^t=xy$ with $x,y\in D_k^{t-1}$, which is a legal move by Lemma~\ref{L-DL}.  Otherwise Mini plays arbitrarily.
	
	Note that $\Delta(|D_k^t|)\ge -4$ by Lemma~\ref{L-DL}.  Since $|D_k^{t_k}|\ge 4cn$, $G^\infty$ will contain at least $cn-1$ $C_{2k+1}$'s which all share a common vertex $v$ and with no other vertices shared between the cycles. Let $\c{C}$ denote a set of $cn-1$ such cycles.  We wish to show that there are few edges involving vertices of $\c{C}$.
	
	\begin{claim}\label{Cl-cycneigh}
		If $C$ is a $C_{2k+1}$ in $G^\infty$, then every vertex of $G^\infty$ has at most two neighbors in $C$.
	\end{claim}
	\begin{proof}
		Let $v'$ be a vertex with neighbors $v_1,v_2,v_3\in C$. First assume $v'\in C$, and without loss of generality that $v_1v',v_2v'$ are edges in $C$.  Thus there exist paths from $v'$ to $v_3$ of lengths $\ell$ and $2k+1-\ell$ for some $\ell\ge 2$. One of these path lengths must be even and at most $2k-1$.  By adding the edge $v_3v'$ to this path, we find an odd cycle in $G^\infty$ of length at most $2k$, a contradiction.
		
		Now assume $v'\notin C$.  Let $P_{12}$ denote the path in $C$ from $v_1$ to $v_2$ that does not contain $v_3$, and similarly define $P_{23}$ and $P_{13}$.   Note that $e(P_{12})+e(P_{23})+e(P_{13})=2k+1$, so at least one of these paths must be of odd length.  If $e(P_{ij})=1$ for any $i,j$, then $\{v_i,v_j,v'\}$ defines a $C_3$ in $G^\infty$, which is forbidden, so assume that this is not the case.  We conclude that for some $i,j$ we have $e(P_{ij})\le 2k-3$ odd, which together with $v'$ implies the existence of an odd cycle of length at most $2k-1$ in $G^\infty$, a contradiction.
	\end{proof}
	By Claim~\ref{Cl-cycneigh}, each cycle of $\c{C}$ is involved in at most $2n$ edges, so the number of edges involving some vertex of $\c{C}$ is at most $2n(cn-1)$. By Mantel's theorem, the number of edges involving vertices that are not in $\c{C}$ is at most 
	
	\[\rec{4}(n-2k(cn-1)-1)^2=\left(\quart-kc+k^2c^2\r)n^2+o(n^2).\]
	
	In total then the number of edges in $G^\infty$ is at most
	\[
		\left(\quart -(k-2)c+k^2c^2\r)n^2+o(n^2).
	\]
	One can verify that $(k-2)c-k^2c^2\ge c^2/4$ for $k\ge5$, from which the result follows.
\end{proof}

We now deal with Phase $-1$.

\begin{prop}\label{P-1}
	Assume that $G^{\tilde{t}}$ is in Phase $-1$ in the $\c{C}_{2k-1}$-saturation game with $k\ge 5$ and $\tilde{t}$ the minimum value for which this holds.  Then Mini can play so that $e(G^{\infty})\le \quart(1-c^2)n^2+o(n^2)$.
\end{prop}
\begin{proof}
	Let $\ell<k$ be the number such that $G^{\tilde{t}-2}$ was in Phase $\ell$. Mini's strategy is as follows.  If $G^{t-1}$ is connected, Mini plays arbitrarily.  Otherwise Mini plays almost the same way as in Strategy~\ref{SS-Strat} with parameter $\ell$, with the only changes being that Mini does not forfeit if $|D_\ell^t|=0$, if $\ell$ is even we replace anywhere $y$ is written in Strategy~\ref{SS-Strat} with $v$, and if $\ell$ is odd we replace $y$ with $u$.  If one goes back through the analysis of Proposition~\ref{P-Beg}, one can verify that with this strategy, for all even $t\ge \tilde{t}$, $G^t$ satisfies (1-$\ell$), (2-$\ell$), and that  $||U^{t}|-|V^t||\ge cn-1$.
	
	Let $U'=\{u'\in U^\infty: \exists u''\in U^\infty,\ u'u''\in E(G^\infty)\}$ and $V'=\{v'\in V^\infty:\exists v''\in V^\infty,\ v'v''\in E(G^\infty)\}$.  Let $D'_\ell$ denote the set of vertices in $G^\infty$ that were in $D_\ell^t$ for some $t$. 
	
	\begin{claim}\label{Cl-Adj}
		No vertex $w\in U'\cup V'$ has $d_a^\infty(w,v)<k-1$.  No vertex of $U'$ is adjacent to any vertex of $D_2'$, and no vertex of $V'$ is adjacent to any vertex of $D_1'$. 
	\end{claim}
	\begin{proof}
		Let $u_1,u_2\in U'$ be such that $u_1u_2$ is an edge in $G^\infty$, and assume $d_a^\infty(u_1,v)<k-1$.  Let $p_i$ denote a shortest alternating path from $u_i$ to $v$, and let $w$ be the vertex that is in both $p_1$ and $p_2$ and with $d_a^{\infty}(w,v)$ maximal (such a $w$ exists since in particular $v$ is in both of these paths).  Observe that the parity of $d_a^\infty(u_i,w)$ is independent of $i$ and that $d_a^\infty(u_2,w)\le d_a^\infty(u_2,v)\le k+1$ since $G^\infty$ satisfies (2-$\ell$).  Thus $G^\infty$ contains an odd cycle of length $d_a^\infty(u_1,w)+d_a^\infty(w,u_2)+d(u_1,u_2)<2k+1$, a contradiction.
		
		If $u'v'$ were an edge in $G^\infty$ for some $u'\in U'$ and $v'\in D_2'$, then we would have $d_a^\infty(u_1,v)\le 3<k-1$ when $k\ge 5$, a contradiction to what we have just proven. The proof for $V'$ is analogous.
	\end{proof} 
	We note that the only place we truly use the hypothesis $k\ge 5$ in this section is in proving the second part of the above claim.

	With the above claim in hand, assume first that $G^\infty$ is not bipartite, or equivalently that $U'\cup V'$ is non-empty since $G^\infty$ satisfies (1-$\ell$).  If $t$ is such that $G^t$ was in Phase 1, then Proposition~\ref{P-Beg} shows that any $x$ with $d_a^t(x,v)>3$ must be isolated in $G^t$.  In particular, since $w\in U'\cup V'$ has $d_a^t(w,v)\ge d_a^\infty(w,v)\ge k-1\ge 4$ by Claim~\ref{Cl-Adj}, every such $w$ was isolated during all of Phase 1.  Further, $d_a^\infty(w,v)\ge 4$ implies that $U'\cup V'$ will be empty unless $\ell\ge 2$ since Mini maintains (2-$\ell$).  In particular, there exists a smallest even number $t_2$ such that $G^{t_2}$ is in Phase 2 with $|D_2^{t_2}|\ge \f{1}{9} n$.  Observe then that $G^{t_2-2}$ is in Phase 1, $|D_2^{t_2-2}|\ge \f{1}{9}n-1$, none of the vertices of $D_2^{t_2-2}$ are isolated at time $t_2-2$, and all of these vertices were isolated at the beginning of Phase 1 since we require $|D_2^t|=0$ in order to transition to Phase 1. Since Phase 1 starts with at most $(\half+4c)n$ isolated vertices, we conclude that $s:=|U'\cup V'|\le (\half +4c)n-(\f{1}{9} n-1)$.
	
	Let $G'$ be the complete bipartite graph with bipartition $U^\infty\cup V^\infty$, where we note that we have $G'=G^\infty$ if $s=0$.  The only edges of $G^\infty$ that are not in $G'$ are those contained in $U'\cup V'$, and there are at most $\quart s^2+1$ such edges by Mantel's theorem.  However, $G'$ contains all of the edges from $D_2'$ to $U'$ and $D_1'$ to $V'$, and none of these edges are in $G^\infty$ by Claim~\ref{Cl-Adj}.  There are at least $|D_2'||U'|+|D_1'||V'|\ge \f{1}{9} ns$ edges of this kind, so in total $G'$ contains at least $\f{1}{9} ns-\quart s^2-1$ more edges than $G^\infty$ does. One can verify that this number is non-negative if $s\ne 0$ and if $n$ is sufficiently large by our bound on $s$ and how we defined $c$. Thus it is enough to give an upper bound for $e(G')=|U^\infty||V^\infty|$.  Since $||U^\infty|-|V^\infty||\ge cn-1$, we have \[|U^\infty||V^\infty|\le \left(\half n-\half(cn-1)\r)\left(\half n+\half(cn-1)\r)=\quart \left(1-c^2\r)n^2+o(n^2).\]
\end{proof}

We are now ready to finish our proof of Theorem~\ref{T-gen}.

\begin{proof}[Proof of the upper bound of Theorem~\ref{T-gen}]
	Recall that Theorem~\ref{T-gen} is stated in terms of the $\c{C}_{2k+1}$-saturation game as opposed to the $\c{C}_{2k-1}$-saturation game.  In particular, $c=(1000(k+1)^2)^{-1}$ and $G^t$ can be in Phase $\ell$ for any $-1\le \ell\le k+1$. In the $\c{C}_{2k+1}$-saturation game, Mini plays as in Proposition~\ref{P-Beg} as long as $G^t$ is not in Phase $-1$ or $k+1$.  If the game ever enters Phase $-1$ or $k+1$, then she plays as in Proposition~\ref{P-1} or Proposition~\ref{P-k}, respectively.
	
	By Proposition~\ref{P-Beg}, $i^t\ge 3$ whenever $G^t$ is not in Phase $-1$ or $k+1$.  Since $i^\infty=0$, we must have $G^\infty$ in Phase $-1$ or $k+1$, and in particular Mini must have played according to either Proposition~\ref{P-1} or Proposition~\ref{P-k}.  By these propositions, $G^\infty$ contains at most $(\quart-\quart c^2)n^2+o(n^2)$ edges. Plugging in $c$ and using $k+1\le 2k$ for $k\ge 4$ gives the result.
\end{proof}

\section{Proof of Theorem~\ref{T-OneCyc}}
In order to prove Theorem~\ref{T-OneCyc} we need to argue that Mini can create a certain subgraph in $G^t$.
\begin{lem}\label{L-Sub}
	Let $k\ge2$ and $\ell=\max(3,\floor{\sqrt{2k}})$.  There exists a constant $t_0$ such that, for $n$ sufficiently large, Mini can play in the $\{C_{2k+1}\}$-saturation game such that $G^{t_0}$ contains a clique on the vertex set $U=\{u_1,\ldots,u_\ell\}$, and such that there exist $\ell$ vertex disjoint paths of length $k-2$, each with a distinct $u_i$ as its endpoint.
\end{lem}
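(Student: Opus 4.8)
The plan is to have Mini build the target gadget $H$ --- the clique $K_\ell$ on $U=\{u_1,\dots,u_\ell\}$ together with the $\ell$ vertex-disjoint pendant paths of length $k-2$ --- during a bounded initial segment of the game, essentially ignoring Max. The key preliminary point is that $H$ is itself a legal position: deleting $u_1,\dots,u_\ell$ from $H$ breaks the pendant paths into disjoint trees, so every cycle of $H$ lies inside $K_\ell$ and therefore has length at most $\ell=\max(3,\floor{\sqrt{2k}})$, which is strictly less than $2k+1$ for all $k\ge2$. Also $H$ has only $\ell(k-1)$ vertices and $\binom{\ell}{2}+\ell(k-2)$ edges, both bounded in terms of $k$ alone.

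Mini plays in two stages. In the first stage (her first $\binom{\ell}{2}$ moves) she builds the clique: immediately after Max's opening move she selects $\ell$ currently isolated vertices $u_1,\dots,u_\ell$, and thereafter on each turn she adds a still-missing edge among $U$, skipping any such edge that Max happens to supply in the meantime. In the second stage she grows the pendant paths: for $i=1,\dots,\ell$ in turn she attaches a path $u_iw^{(i)}_1\cdots w^{(i)}_{k-2}$ one edge at a time, always choosing $w^{(i)}_j$ to be a vertex that is isolated at that moment and that has not been used earlier in the construction. After at most $t_0:=2\bigl(\binom{\ell}{2}+\ell(k-2)\bigr)$ moves every clique edge and path edge of a copy of $H$ has been placed, so $H\subseteq G^{t_0}$; note $t_0$ depends only on $k$, and that extra edges added by Max cannot destroy a subgraph. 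The fresh isolated vertices needed in both stages always exist, since at most $t_0=O_k(1)$ edges are ever present during this segment, leaving at least $n-2t_0>0$ isolated vertices for $n$ large.

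Finally we must check that each of Mini's moves is legal, i.e.\ creates no $C_{2k+1}$. A second-stage move adds an edge one of whose endpoints is isolated at that time, hence lies on no cycle whatsoever. A first-stage move $u_iu_j$, played from a position that is still $C_{2k+1}$-free, could only create a $C_{2k+1}$ through the new edge itself, which would require a $u_i$--$u_j$ path of length $2k$, that is $2k$ distinct edges, to already be present; but at most $2\binom{\ell}{2}-1$ edges are present when Mini makes this move, and $\binom{\ell}{2}\le k$ for every $k\ge3$ by the definition of $\ell$, so there are fewer than $2k$ edges and no such path can exist. (The sole exception is $k=2$, where $\ell=3$, $\binom{\ell}{2}=3>k$, and $H$ is just a triangle; one checks directly that Mini can build a triangle in the $\{C_5\}$-saturation game by choosing her three vertices to avoid Max's edges.) The only genuinely delicate point is this first-stage count, and it is precisely where the choice $\ell=\max(3,\floor{\sqrt{2k}})$ is needed: it keeps $\binom{\ell}{2}$ small enough that Mini completes the clique before the edge count reaches the threshold $2k$ at which a $C_{2k+1}$ could be forced through a clique edge. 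The pendant-path stage and all of Max's moves are harmless.
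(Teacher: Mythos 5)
Your proof is correct and follows essentially the same route as the paper's: build the clique first, then the pendant paths from isolated vertices, with legality of the clique edges guaranteed by the count $2\binom{\ell}{2}-1\le 2k-1<2k$ edges present, which is exactly the paper's Strategy 5.2 and edge-counting argument (the paper likewise treats the small-$\ell$ case, where this count fails only for $k=2$, by a brief direct remark).
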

\begin{proof}
	Mini will use the following strategy.
	
	\begin{strat}\label{SS-Strat2}
		~
		\begin{itemize}
			\item[Step 0.] If there exists some $u_i,u_j$ such that $G^{t-1}$ does not contain the edge $u_iu_j$, Mini plays $e^t=u_iu_j$.
			
			\item[Step 1.] Let $t'$ be the smallest even value such that $G^{t'-1}$ contains every edge of the form $u_iu_j$.  Mini plays $e^{t'}=u_1x_1,\ e^{t'+2}=x_1x_2,\ldots,\ e^{t'+2k-6}= x_{k-3}x_{k-2}$, where $x_i$ is some isolated vertex in $G^{t'+2i-3}$.
			
			\item[Step 2.] Mini plays $e^{t'+2k-4}=u_2y_1,\ e^{t'+2k-2}=y_1y_2,\ldots,\ e^{t'+4k-12}=y_{k-3}y_{k-2}$, where $y_i$ is some isolated vertex in $G^{t'+2k+2i-7}$.

		\end{itemize}
		One defines Step $i$ for all $3\le i\le \ell$ in essentially the same way as Steps 1 and 2.
	\end{strat}
	Observe that if Mini can use the above strategy, then she finishes at time at most $t_0:=2{\ell \choose 2}+2\ell(k-2)$ with the desired structure.  Thus its enough to argue that she can indeed use this strategy when $n$ is sufficiently large. Since $t_0$ is a constant, for $n$ sufficiently large there will always exist an isolated vertex in $G^{t-1}$ for $t\le t_0$, so Mini can play as prescribed by Steps 1 through $\ell$ if the game reaches this point.  It remains to argue that Mini can plays as prescribed by Step 0.
	
	If $\ell=3$, then it is not too difficult to see that Mini can play as prescribed by Step 0 regardless of what Max does, so assume $\ell=\sqrt{2k}\ge 3$. For any $t\le 2{\ell \choose 2}$, we claim that any choice of $e^t$ is a legal move. Indeed, for any such $t$, $G^{t-1}$ will contain at most $2{\ell \choose 2}-1\le \ell^2-1\le 2k-1$ edges, and hence any choice of $e^{t}$ will not create a $C_{2k+1}$ in $G^{t}$.  Thus Mini can play according to Strategy~\ref{SS-Strat2} and we conclude the result.
\end{proof}

\begin{proof}[Proof of Theorem~\ref{T-OneCyc}]
	Mini first uses the strategy in Lemma~\ref{L-Sub}, making sure that $G^{t_0}$ contains a clique on $U=\{u_1,\ldots,u_\ell\}$ and vertex disjoint paths $\{p_1,\ldots,p_\ell\}$, each of length $k-2$ with $p_j$ starting at $u_j$ and ending at, say, $v_j$.  Let $V=\{v_1,\ldots,v_\ell\}$, and let $v^t$ denote a $v_j$ with minimal degree in $G^t$. Let $i^t$ denote the number of isolated vertices of $G^t$.
	
	For all even $t>t_0$, Mini uses the following strategy.  If $i^{t-1}=0$, Mini plays arbitrarily.  Otherwise if Max plays $xy$ with $x,y$ isolated vertices of $G^{t-1}$, Mini plays $xv^t$ (which is a legal move since this does not create a cycle).  Otherwise Mini plays $xv^t$ with $x$ an isolated vertex of $G^{t-1}$.  
	
	We wish to bound the number of edges of $G^\infty$ when Mini uses the above strategy.  To this end, let $P$ denote the vertices that belong to some $p_j$ (including $u_j$ and $v_j$), let $V_j=N^\infty(v_j)$, let $V=\bigcup V_j$, and let $W=V(G^\infty)\setminus (P\cup V)$.  Let $p'_j$ denote $p_j$ but with $p'_j$ treated as a path from $v_j$ to $u_j$.  Lastly, for $X,Y\sub V(G^\infty)$, let $e(X,Y)$ denote the number of edges in $G^\infty$ where one vertex lies in $X$ and the other in $Y$.
	
	\begin{claim}\label{Cl-Edges}
		The following bounds hold.
		\begin{itemize}
			\item[(a)] $e(P,V(G^\infty))\le \ell(k-1)n=o(n^2)$. 
			\item[(b)] $e(V,V)\le k\cdot \f{2k-1}{2}n=o(n^2)$.
			\item[(c)] $e(W,V)\le (\half(1+\f{1}{\ell})n-|W|)|W|+o(n^2)$.
		\end{itemize}
	\end{claim}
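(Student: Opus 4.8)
Parts (b) and (c) will be proved by extracting, from an excess of edges, a forbidden $C_{2k+1}$ assembled out of the clique $U$, the paths $p_1,\dots,p_\ell$ that Mini has built, and the offending edges; the hypothesis $\ell\ge 3$ is exactly what lets us route through the clique when one extra edge is needed to correct the parity. Part (a) is a one-line degree count: the $p_j$ are vertex-disjoint and each has $k-1$ vertices (endpoints $u_j,v_j$ included), so $|P|=\ell(k-1)$ is a constant, and since every vertex of $G^\infty$ has degree at most $n-1$ the number of edges meeting $P$ is at most $|P|(n-1)<\ell(k-1)n=o(n^2)$.

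For (b) the plan is to show that $V$ is linearly sparse; since edges meeting $P$ are already bounded in part (a), it suffices to estimate edges with both endpoints in $V\setminus P$. The first point is that there is no edge of $G^\infty$ joining $w\in V_i\setminus P$ to $w'\in V_j\setminus P$ with $i\ne j$: the walk that runs $w\to w'\to v_j$, down $p_j$ to $u_j$, across the clique $u_j\to u_m\to u_i$ for a third clique vertex $u_m$ (which exists because $\ell\ge 3$), up $p_i$ to $v_i$, and back $v_i\to w$ is a cycle of length $1+1+(k-2)+2+(k-2)+1=2k+1$ whose vertices are distinct since $w,w'\notin P$. Hence the endpoints of any edge inside $V\setminus P$ lie in one common part and in no other part, so the relevant vertices split into $\ell$ pairwise disjoint sets $V_1^\ast,\dots,V_\ell^\ast$ and $e(V\setminus P,V\setminus P)=\sum_j e(G^\infty[V_j^\ast])$. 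The second point is that every vertex of $V_j^\ast$ is adjacent to $v_j$, so a path on $2k$ vertices inside $G^\infty[V_j^\ast]$ would close up through $v_j$ into a $C_{2k+1}$; thus $G^\infty[V_j^\ast]$ has no such path, and a standard bound (Erd\H{o}s--Gallai, or: such a graph has a vertex of degree $<2k-1$, now induct) gives $e(G^\infty[V_j^\ast])\le(k-1)|V_j^\ast|$. Summing over the disjoint $V_j^\ast$ yields $e(V\setminus P,V\setminus P)\le(k-1)n\le k\cdot\f{2k-1}{2}n$, which in particular is $o(n^2)$.

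For (c), I would first establish that for every $w\in W$ all neighbours of $w$ in $V$ lie in a single part $V_{j(w)}$ and in no other part. If $w$ had two distinct $V$-neighbours $w'\in V_i$ and $w''\in V_j$ with $i\ne j$, then $w\to w'\to v_i$, down $p_i$ to $u_i$, across $u_i\to u_j$, up $p_j$ to $v_j$, and back $v_j\to w''\to w$ is a cycle of length $1+1+(k-2)+1+(k-2)+1+1=2k+1$ with distinct vertices (here $w\notin P$ since $w\in W$, and the $o(n^2)$ edges meeting $V\cap P$ are discarded beforehand), a contradiction; a short case check then promotes ``no two $V$-neighbours of $w$ straddle two parts'' to the full statement. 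With $W_j=\{w\in W:j(w)=j\}$ and the disjoint $V_j^\ast$ of (b), this gives $e(W,V)\le\sum_j|W_j||V_j^\ast|+o(n^2)$. It then remains to bound $\sum_j|W_j||V_j^\ast|$, and for this one combines: (i) $P\cup V\cup W=V(G^\infty)$ with $V\cap W=\emptyset$, so $|V|+|W|=n-O(1)$ and hence $\sum_j|V_j^\ast|\le|V|\le n-|W|$; and (ii) the consequences of Mini's min-degree rule --- because the isolated vertex Mini makes non-isolated each round is joined to the current minimum-degree $v_j$, the sizes $|V_j|=\deg^\infty(v_j)$ stay balanced across $j$ (a part whose degree Max has inflated is simply never chosen by Mini again), and moreover $W_j$ can grow only in rounds in which Mini simultaneously feeds $V_j^\ast$, so Max cannot pair a large $W_j$ with a small $V_j^\ast$; in particular $|W|\le(\half+o(1))n$. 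A Cauchy--Schwarz estimate then shows that under (i)--(ii) the sum $\sum_j|W_j||V_j^\ast|$ is largest in the balanced configuration, where it equals $\big(\half(1+\rec{\ell})n-|W|\big)|W|+o(n^2)$.

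I expect part (c), specifically the quantitative form of Mini's balancing, to be the main obstacle. The three cycle constructions are routine length counts and the sparsity estimate behind (b) is classical; the real work is converting ``Mini always attaches to the minimum-degree $v_j$'' into the two facts used above --- that neither the sizes $|V_j|$ nor the coupled sizes $|W_j|$ can be loaded too unevenly by Max, and that $|W|$ cannot exceed $n/2$ by more than $o(n)$ --- which amounts to re-running the turn-by-turn bookkeeping from the analysis of Strategy~\ref{SS-Strat}.
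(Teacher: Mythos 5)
Your parts (a) and (b), and the two $C_{2k+1}$ constructions in (b) and (c), are essentially identical to the paper's proof (the paper uses the same cycles $w_jp_j'u_rp_{j'}w_{j'}$ and $ww_jp_j'p_{j'}w_{j'}$, and the same Erd\H{o}s--Gallai step; your count of a path on $2k$ vertices closing through $v_j$ is correct and gives an even slightly cleaner constant). The gap is in the final quantitative step of (c), and the mechanism you propose there is wrong on two counts. First, Mini's min-degree rule does \emph{not} keep the sizes $|V_j|=|N^\infty(v_j)|$ balanced: Max may attach as many isolated vertices as he likes to a single $v_j$, and Mini has no way to undo this, so one part can be much larger than the others. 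What the rule actually yields is only a \emph{lower} bound: Mini makes at least $\half n+O(1)$ additions of the form $xv^t$, each to the currently minimum-degree $v_j$, hence every $j$ satisfies $|V_j|\ge \f{n}{2\ell}+O(1)$. Second, your extremal claim is backwards: subject only to $\sum_j|W_j|=|W|$ and $\sum_j|V_j^\ast|\le n-|W|$, the sum $\sum_j|W_j||V_j^\ast|$ is maximized by \emph{concentrating} both families on one index, giving $|W|(n-|W|)$, which is strictly weaker than the bound $(\half(1+\rec{\ell})n-|W|)|W|$ you need; the balanced configuration is not the maximizer, so ``Cauchy--Schwarz'' does not close the argument, and the auxiliary assertions about $W_j$ growing only when $V_j^\ast$ is fed (and about $|W|\le(\half+o(1))n$) are neither established nor needed.

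The correct finish (and the paper's) uses the lower bound from the min-degree rule in the opposite direction: since the $V_j$ are essentially disjoint and $\sum_j|V_j|\le n-|W|-|P|$, the bound $|V_{j'}|\ge\f{n}{2\ell}+O(1)$ for the $\ell-1$ other parts forces
\[
|V_j|\;\le\; n-|W|-(\ell-1)\f{n}{2\ell}+O(1)\;=\;\half\left(1+\rec{\ell}\right)n-|W|+O(1)
\]
for every $j$. Combined with your (correct) observation that each $w\in W$ meets at most one part, this gives $e(W,V)\le|W|\max_j|V_j|\le(\half(1+\rec{\ell})n-|W|)|W|+O(n)$ directly; no decomposition of $W$ into $W_j$'s and no balancedness of the parts is required.
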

	\begin{proof}
		(a) follows from $|P|= \ell(k-1)$.
		
		For (b), we first claim that $e(V_j,V_j)\le \f{2k-1}{2}n$. Indeed if this were not the case, then by the Erd\H{o}s-Gallai Theorem there would exist a path of length $2k$ in $V_j$.  Since $v_j$ is adjacent to the two endpoints of this path, this would imply that $G^\infty$ contains a $C_{2k+1}$, a contradiction.  We also claim that $e(V_j,V_{j'})=0$ whenever $j\ne j'$.  Indeed assume that $G^{\infty}$ contained the edge $w_jw_{j'}$ with $w_j\in V_j,\ w_{j'}\in V_{j'}$. Then for any $r\ne j,j'$ (and such an $r$ exists since $\ell\ge 3$), $G^\infty$ would contain the cycle $w_jp_j' u_{r} p_{j'} w_{j'}$.  But this is a $C_{2k+1}$, a contradiction.  We conclude that (b) holds.
		
		For (c), we claim that, for any $w\in W$, we have $e(\{w\},V_j)\ne 0$ for at most one $j$. Indeed assume $G^{\infty}$ contained the edges $ww_j$ and $ww_{j'}$ with $w_j\in V_j,\ w_{j'}\in V_{j'},\ j\ne j'$.  Then $G^\infty$ would contain the cycle $ww_jp_j'p_{j'}w_{j'}$, which is a $C_{2k+1}$, a contradiction.  It follows that $e(W,V)\le |W|\max(|V_j|)$, so it will be enough to bound $\max(|V_j|)$.
		
		Note that $i^{t_0}\ge n-2t_0$ and $\Delta(i^{t})\ge -2$ for all even $t\ge t_0+2$ by the way the strategy was constructed. It follows that there are at least $n/2+O(1)$ even values of $t$ with $i^{t-1}\ne 0$, and hence Mini adds an edge of the form $xv^t$ for at least this many values of $t$.  Thus Mini ensures that each of the $\ell$ vertices $v_j$ have at least $ \f{n}{2\ell}+O(1)$ neighbors in $G^\infty$, and hence $|V_j|\ge \f{n}{2\ell}+O(1)$ for all $i$.  Thus \[|V_j|=n-\sum_{j'\ne i} |V_{j'}|-|W|-|P|\le \left(1-\f{\ell-1}{2\ell}\r)n-|W|+O(1)\]
		for all $j$.  Plugging this bound into $e(W,\bigcup V_j)\le |W|\max(|V_j|)$ and using  $(1-\f{\ell-1}{2\ell})=\half(1+\rec{\ell})$  shows that (c) holds.
	\end{proof}
	
	By Claim~\ref{Cl-Edges} and Mantel's theorem, we have
	\begin{align*}
	e(G^\infty)&\le e(W,W)+e(W,V)+o(n^2)\\ &\le \quart |W|^2+\left(\half(1+\f{1}{\ell})n-|W|\right)|W|+o(n^2).
	\end{align*}
	This value is maximized when $|W|=\f{1}{3}(1+\f{1}{\ell})n$, giving an upper bound of $\f{1}{12}(1+\rec{\ell})^2n^2+o(n^2)$
	as desired.
\end{proof}

\section{Proof of Theorem~\ref{T-Mod3}}
We will say that a vertex $v$ is good if all but at most one edge incident to $v$ is contained in a triangle.  We will say that a graph $G$ is $k$-good if there exists a set of edges $B(G)$ with $|B(G)|\le k$ such that every vertex of $G-B(G)$ is good.  Observe that if $G$ is $k$-good and $G'$ is $G$ plus an edge, then $G'$ is $(k+1)$-good.

\begin{prop}\label{P-3Alg}
	There exists a strategy for Mini in the $(\c{C}_{\infty}\sm \{C_3\})$-saturation game such that for all even $t$, either $G^{t-1}$ is $(\c{C}_{\infty}\sm \{C_3\})$-saturated or $G^t$ is 1-good.
\end{prop}

To prove this, we will need the following lemma concerning the structure of 2-good graphs.
\begin{lem}\label{L-Good}
	Let $G$ be a 2-good graph that contains no $C_{2k+1}$ for any $k\ge 2$.  Then $G$ contains no $C_{\ell}$ for any $\ell \ge 5$.
\end{lem}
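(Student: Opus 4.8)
The plan is to argue by contradiction, with an immediate reduction. Since the hypothesis already forbids every odd cycle of length at least $5$, a cycle of length $\ell\ge 5$ in $G$ must be an \emph{even} cycle of length $\ell\ge 6$. Fix such a cycle $C=v_0v_1\cdots v_{\ell-1}v_0$ (indices mod $\ell$) and a witnessing set $B=B(G)$ of at most two edges; no minimality assumption on $\ell$ will be needed. The first step is a structural fact about chords: if $v_pv_q$ is a chord of $C$, the two arcs of $C$ joining $v_p$ to $v_q$ have lengths $a$ and $\ell-a$, which have the same parity since $\ell$ is even. If that parity is even, the chord splits $C$ into two odd cycles of lengths $a+1$ and $\ell-a+1$, and the larger has length at least $\ell/2+1\ge 4$, hence is an odd cycle of length at least $5$ — contradiction. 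So every chord of $C$ joins vertices at odd distance along $C$; in particular $C$ has no chord $v_{i-1}v_{i+1}$ and no chord $v_iv_{i+2}$, as these would have span $2$.

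Next I would exploit $2$-goodness. The two edges of $B$ meet at most four vertices, so at least $\ell-4\ge 2$ vertices $v_i$ of $C$ have \emph{both} incident cycle edges $v_{i-1}v_i$ and $v_iv_{i+1}$ lying in $G-B$. Fix such a $v_i$. Since $v_i$ is good in $G-B$, at most one edge incident to $v_i$ in $G-B$ lies in no triangle of $G-B$, so at least one of $v_{i-1}v_i,\,v_iv_{i+1}$ lies in a triangle of $G-B\subseteq G$; after relabelling, say $v_iv_{i+1}$ lies in a triangle $v_iv_{i+1}w$ of $G$.

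Now I would locate $w$. If $w\notin V(C)$, then $v_0\cdots v_i\,w\,v_{i+1}\cdots v_{\ell-1}v_0$ is a cycle of length $\ell+1$, which is odd and at least $7$ — contradiction. So $w=v_j$ for some $j\notin\{i,i+1\}$. The distances of $v_j$ from $v_i$ and from $v_{i+1}$ along $C$ differ by $1$, hence cannot both be odd, so at least one of $v_iv_j,\,v_{i+1}v_j$ is not an odd-span chord; by the first step it is then not a chord at all, i.e.\ it is an edge of $C$. If $v_iv_j$ is a $C$-edge then $v_j=v_{i-1}$, so the triangle $v_iv_{i+1}v_{i-1}$ contains the span-$2$ chord $v_{i-1}v_{i+1}$; if $v_{i+1}v_j$ is a $C$-edge then $v_j=v_{i+2}$, so the triangle contains the span-$2$ chord $v_iv_{i+2}$. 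Either way this contradicts the first step. Since the cases are exhaustive, $G$ has no cycle of length $\ge 5$.

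The main obstacle, and essentially the only place with real content, is the chord-parity bookkeeping: recognising that an even cycle of length at least $6$ admits no even-span chord, and then checking that the triangle supplied by goodness is forced to produce exactly such a chord (or else an odd cycle of length $\ell+1$). Everything else is routine; in particular the set $B$ is used only for the count $\ell-4\ge 2$, which needs nothing beyond $\ell\ge 6$ and $|B|\le 2$, and a triangle found in $G-B$ is automatically a triangle of $G$.
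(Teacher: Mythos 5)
Your proof is correct and follows essentially the same route as the paper's: use $|B(G)|\le 2$ and $\ell\ge 6$ to find a vertex of the even cycle both of whose cycle-edges survive in $G-B(G)$, invoke goodness to obtain a triangle on one of them, rule out an apex off the cycle via the resulting $C_{\ell+1}$, and then derive a forbidden odd cycle from the chords to the apex. Your only organizational difference is isolating the ``no even-span chord'' observation as a separate first step, which is the same parity argument the paper runs at the end.
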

\begin{proof}
	Assume for contradiction that there exists an even cycle $C$ in $G$ of length $2k$ with $k\ge 3$ on the vertex set $\{v_1,\ldots,v_{2k}\}$, and let $C'=C- B(G)$.  Since $k\ge 3$, there exists an $i$ such that $C'$ contains the edges $v_{i-1}v_i$ and $v_iv_{i+1}$.  Since these edges are in $G-B(G)$, at least one of these edges is in a triangle, say $v_iv_{i+1}w$ is a triangle in $G$.  If $w$ is not in $C$, then $v_1v_2\cdots v_i wv_{i+1}\cdots v_{2k}$ is a $C_{2k+1}$ in $G$, a contradiction.  Thus $w=v_j$ for some $j\ne i,i+1$.  
	
	Note that $v_j\ne i+2,i+3$.  Indeed if, say, $j=i+2$, then $v_1v_2\cdots v_i v_{i+2}\cdots v_{2k}$ would be a $C_{2k-1}$ in $G$, a contradiction.  A similar result holds if $j=i+3$.  Observe that $G$ contains the cycles $v_iv_{i+1}\cdots v_j$ and $v_{i+1}v_{i+2}\cdots v_j$.  One of these cycles must have odd parity with length at least 5 since $j\ne i+2,i+3$, a contradiction.  We conclude that $G$ contains no $C_{2k}$ with $k\ge 3$, proving the result.
\end{proof}

\begin{proof}[Proof of Proposition~\ref{P-3Alg}]
	$G^0$ is 1-good, so inductively assume that Mini has been able to play so that $G^{t-2}$ is 1-good for some even $t\ge 2$.  If $G^{t-1}$ is saturated then we are done, so assume this is not the case.  If  $G^{t-1}$ is 0-good, then Mini plays $e^t$ arbitrarily and $G^t$ will be 1-good.
	
	Now assume that $G^{t-1}$ is not 0-good.  That is, there exist edges $v_1x$ and $v_2x$ with $v_1\ne v_2$ such that neither of these edges are contained in triangles.   Mini then adds the edge $e^t=v_1v_2$, which we claim is a legal move.  If it were not, then there must exist a path $P$ of length $2k$ with $k\ge2$ from $v_1$ to $v_2$ in $G^{t-1}$.  If $x$ is not a vertex of $P$, then $G^{t-1}$ contains the cycle formed by taking $P$ and adding the edges $xv_1$ and $xv_2$, which is a $C_{2k+2}$.  Since inductively $G^{t-2}$ is 1-good, $G^{t-1}$ is 2-good, and hence does not contain such a $C_{2k+2}$ by Lemma~\ref{L-Good}. Thus $x$ must be a vertex of $P$.  
	
	Assume without loss of generality that $P$ does not use the edge $xv_1$.  Let $P_1$ denote the subpath of $P$ from $x$ to $v_1$ and let $k_1$ denote the length of $P_1$.  Note that $k_1\ne 1$ since $P$ does not use $xv_1$, and that $k_1\ne 2$ since this would imply that $xv_1$ is contained in a triangle.  Note that $G^{t-1}$ contains a $C_{k_1+1}$, namely by taking $P_1$ together with the edge $xv_1$. Thus $k_1\le 3$ by Lemma~\ref{L-Good}, so we conclude that $k_1=3$.
	
	Let $C=v_1abx$ be the 4-cycle formed from $P_1$ and $xv_1$.  If, say, $ab$ were contained in a triangle $abc$, then we must have $c=v_1$ or $c=x$, as otherwise $v_1acbx$ defines a $C_5$ in $G^{t-1}$.  But if $c=v_1$ or $x$, then $v_1x$ is contained in a triangle, a contradiction.  A similar analysis shows that no edge of $C$ is contained in a triangle.  This is only possible if $B(G^{t-1})$ consists of two edges of $C$ that are not both incident to $x$, as otherwise one of $ab$ and $v_1a$ would be contained in a triangle.  In particular, two of the edges $\{xv_1,xv_2,xb\}$ are not in $B(G^{t-1})$, and we conclude that at least one of these edges must be contained in a triangle.  But we have assumed that none of these edges are in triangles, a contradiction.  We conclude that $v_1v_2$ is a legal move to play.
	
	Note that at least one of the edges $xv_1$ and $xv_2$ must be in $B(G^{t-1})$, as otherwise $G^{t-1}- B(G^{t-1})$ would not have all good vertices (namely, $x$ would not be a good vertex).  Since $v_1x$, $v_2x$, and the new edge $v_1v_2$ are contained in a triangle of $G^t$, the set $B(G^t):=B(G^{t-1})\sm \{v_1x,v_2x\}$ shows that $G^t$ is 1-good as desired.
\end{proof}

It remains to bound how many edges $G^\infty$ will have after Mini uses the strategy of Proposition~\ref{P-3Alg}.

\begin{lem}\label{L-3Up}
	$\ex(n,\{C_5,C_6,\ldots\})\le 2n-2$.
\end{lem}
\begin{proof}
	The statement is trivially true for $n\le 4$, so assume we have proven the statement up to $n>4$. Let $G$ be an extremal $n$-vertex graph and assume that $e(G)\ge 2n-1$.  If $G$ contains a vertex $x$ with $d(x)\le 2$, then $G'=G-\{x\}$ is an $(n-1)$-vertex graph with $e(G')\ge 2(n-1)-1$.  By our inductive hypothesis, $G'$ contains a large cycle, and hence the same is true for $G$.  Thus we can assume that every vertex of $G$ has degree at least 3.
	
	We can assume that $G$ is connected, as adding an edge between two components of $G$ would increase $e(G)$ without creating any cycles. Let $T$ be a depth-first-search tree of $G$.  For any $x\in T$, let $x_1$ denote the parent of $x$ in $T$, and recursively define $x_i=(x_{i-1})_1$ for $i\ge 2$.  Observe that if $xy$ is an edge in $G$, then either $y=x_i$ or $x=y_i$ with $i=1,\ 2$, or 3.  Indeed, assume without loss of generality that $y$ was discovered before $x$ when constructing $T$.  Observe that the subtree of $T$ with $y$ as a root will contain every neighbor of $y$ that has not been discovered before $y$.  In particular, this subtree will contain $x$, and we will have $y=x_i$ where $i$ is the depth of $x$ in this subtree.  Further, we must have $i=1,\ 2,$ or 3, as otherwise $G$ would contain a $C_k$ with $k\ge 5$.
	
	Let $x$ be a vertex of maximum depth in $T$, which in particular means that $x$ is a leaf in $T$.  We wish to show that $\{x,x_1,x_2,x_3\}$ induces a $K_4$ in $G$ and that $d(x)=d(x_1)=d(x_2)=3$. To this end, we will say that a vertex $y$ has label $i$ with $i>1$ if $y_iy$ is an edge in $G$, and we let $S(y)$ denote the set of vertices $z\ne y$ with $z_1=y_1$.  Note that by our above argument, no vertex can have label $i>3$.
	
	Since we assumed $d(x)\ge 3$ and since $x$ only has one neighbor in $T$, $x$ must have both label 2 and 3.  We claim that $S(x)=\emptyset$.  Indeed, if there exists some $y\in S(x)$, then $y$ would also be a leaf (since $x$ is a vertex of maximum depth), so it would also have to have label 3, but then $G$ would contain the cycle $xx_2x_3yx_1$, a  contradiction.  Thus we must have $S(x)=\emptyset$.  Since $d(x_1)\ge 3$, and since $S(x)=\emptyset$, $x_1$ must have at least one of label 2 or 3, but if it had label 3 then $G$ would contain the cycle $xx_2x_3x_4x_1$, so $x_1$ only has label 2.  If $x_2$ had label 2, then $G$ would contain the cycle $xx_1x_2x_4x_3$, and a similar result holds if $x_2$ had label 3.  Thus $x_2$ does not have any label.  We claim that $S(x_1)=\emptyset$.  Indeed if we had $y\in S(x_1)$ with $y$ a leaf, then $y$ must have label 2 and $G$ would contain the cycle $yx_3xx_1x_2$.  Otherwise, $y$ would have a child $z$ which is a leaf (since it is at the same depth as $x$) and hence must have label 3, which means that $G$ contains the cycle $zx_3xx_1x_2y$.  Thus $S(x_1)=\emptyset$, proving our claims about the vertices $x,x_1,x_2,x_3$.
	
	Let $G'=G-\{x,x_1,x_2\}$.  By our above analysis, $G'$ is an $(n-3)$-vertex graph with $e(G')=e(G)-6\ge2(n-3)-1$, so by the induction hypothesis $G'$, and hence $G$, contains a large cycle, proving the desired bound.
\end{proof}
We note that the above bound is sharp, as can be seen by taking $G$ to be the graph obtained by taking $k$ disjoint triangles and then adding an additional vertex which is made adjacent to every other vertex. The above proof can easily be modified to characterize all extremal graphs, though we have no need for this here.

\begin{proof}[Proof of Theorem~\ref{T-Mod3}]
	Mini uses the strategy of Proposition~\ref{P-3Alg}.  This implies that $G^\infty$ is 2-good with no $C_{2k+1}$ for any $k\ge 2$, and hence contains no $C_k$ for any $k\ge 5$.  Lemma~\ref{L-3Up} then implies that $e(G^\infty)\le 2n-2$.
\end{proof}

\section{Concluding Remarks}\label{S-Con}

We claim that by analyzing the proof of Theorem~\ref{T-gen}, one can conclude that $\sat_g(\c{C}_5;n)$ and $\sat_g(\c{C}_7;n)$ are strictly less than $\floor{\quart n^2}$, which, as we mention in the beginning of Section~\ref{S-Up} is a non-trivial result.  We suspect that stronger bounds exist.

\begin{conj}
	For all $k\ge 1$ there exists a $c_k>0$ such that \[\sat_g(\c{C}_{2k+1};n)\le \left(\quart -c_k\right)n^2+o(n^2).\]
\end{conj}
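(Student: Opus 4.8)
\noindent A plausible route to the conjecture, and the one I would pursue, is the following. For $k=1$ it already follows from the Bir{\'o}--Horn--Wildstrom bound $\sat_g(\{C_3\};n)\le\f{26}{121}n^2+o(n^2)$, and for $k\ge4$ it is the upper bound of Theorem~\ref{T-gen}; so the remaining content is the two cases $k=2$ and $k=3$ --- the $\c{C}_5$- and $\c{C}_7$-saturation games --- and the plan is to push the argument of Section~\ref{S-Up} below the threshold $k=5$ it uses.

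First I would rerun the entire apparatus of Section~\ref{S-Up} verbatim --- the path families $D_\ell^t$, the approximate bipartition $U^t\cup V^t$, the Phases, Mini's Strategy~\ref{SS-Strat}, and Propositions~\ref{P-Beg} and~\ref{P-k} --- with the constant $c$ now allowed to depend on $k$ and chosen small enough that the finitely many numerical inequalities survive; as the paper notes, essentially all of those inequalities only required $c$ to be a small absolute constant with $k$ otherwise free. The single genuine use of $k\ge5$ is the second part of Claim~\ref{Cl-Adj} inside the Phase~$-1$ analysis (Proposition~\ref{P-1}): for small $k$ a vertex lying in a monochromatic edge of $G^\infty$ need only be at alternating distance about $k$ from $v$, so it is no longer forbidden to be adjacent to $D'_2$ (and for $k=2$, not even to $D'_1$), and the comparison between $e(G^\infty)$ and the complete bipartite graph on $U^\infty\cup V^\infty$ breaks.

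The naive repair --- trying to show that each monochromatic edge still destroys $\Theta(n)$ bipartite edges --- does not go through as stated: in the $\c{C}_{2k-1}$-game a legal monochromatic edge inside $U^\infty$ need only join two vertices at graph-distance $2k$ (that is, $4$ or $6$), which is compatible with \emph{both} endpoints sitting at alternating distance only about $k$ from $v$, and then neither endpoint is provably non-adjacent to any $\Theta(n)$-sized family $D'_j$ on the opposite side. What seems to be needed instead is for Mini to play more aggressively once Phase~$-1$ is entered --- say, continuing to grow the families $D_j^t$ for all $j\le k$ so as to pin down the possible locations of monochromatic edges far more tightly, and then extracting a quadratic edge deficit from the resulting rigidity --- or else to abandon the bipartite-comparison argument altogether and have Mini build a $C_6$ with $\Theta(1)$ attached paths of length about $k$ (which, unlike a clique, contains no $C_3$) and force many $C_{2k+1}$'s by joining path-ends.

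The main obstacle is exactly this Phase~$-1$ argument for small $k$: when $k=2$ the graph Mini is left with is so nearly bipartite that I suspect a genuinely new idea beyond the ``almost bipartite'' template is required, whereas $k=3$ looks closer to the reach of the present machinery. A secondary technical point is to check that whatever new Phase~$-1$ strategy one adopts still preserves the imbalance $\lvert\lvert U^t\rvert-\lvert V^t\rvert\rvert\ge cn-1$ and keeps all of Mini's moves --- in particular her ``skip a turn'' moves --- legal in the $\c{C}_5$- and $\c{C}_7$-games, where the shortest creatable odd cycle is only barely longer than the alternating paths she maintains.
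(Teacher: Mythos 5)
The statement you are addressing is stated in the paper only as a conjecture, and the paper offers no proof of it; so the question is whether your proposal closes the gap, and it does not. You have correctly mapped the terrain: $k=1$ follows from the Bir\'o--Horn--Wildstrom bound $\sat_g(\{C_3\};n)\le\f{26}{121}n^2+o(n^2)$, $k\ge 4$ follows from the upper bound of Theorem~\ref{T-gen}, and the only genuine use of the hypothesis ``$k\ge 5$'' in Section~\ref{S-Up} (in its internal indexing) is the second part of Claim~\ref{Cl-Adj}, where one needs $3<k-1$ to forbid edges between $U'$ and $D_2'$. That diagnosis matches the paper's own remark that the argument degrades for small $k$, yielding only that $\sat_g(\c{C}_5;n)$ and $\sat_g(\c{C}_7;n)$ are strictly below $\lfloor\quart n^2\rfloor$ rather than below it by a quadratic margin.

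The gap is that for $k=2$ and $k=3$ you do not supply an argument: you correctly observe that the naive repair fails (a legal monochromatic edge in the $\c{C}_5$- or $\c{C}_7$-game can join two vertices both at small alternating distance from $v$, so no $\Theta(n)$-sized family $D'_j$ is provably non-adjacent to its endpoints, and the comparison of $e(G^\infty)$ with the complete bipartite graph on $U^\infty\cup V^\infty$ collapses), and then you offer only speculative alternatives (growing all the $D_j^t$ simultaneously, or replacing the clique of Lemma~\ref{L-Sub} by a $C_6$ with attached paths) without verifying legality of the moves, the preservation of the imbalance $\bigl||U^t|-|V^t|\bigr|\ge cn-1$, or the resulting edge count. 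A secondary caution: even granting that the finitely many numerical inequalities in Claims~\ref{Cl-Init} and \ref{Cl-Turn} survive for small $k$ with $c$ adjusted, Proposition~\ref{P-1} also relies on Claim~\ref{Cl-Adj} to show $U'\cup V'$ is small via the Phase~1 isolated-vertex count, and that chain of deductions would need to be rebuilt from scratch under any new Phase~$-1$ strategy. As it stands the cases $k=2,3$ of the conjecture remain open, both in the paper and in your proposal.
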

In fact, we believe that a stronger statement is true. As a consequence of the bounds of Theorem~\ref{T-gen}, we know that $\sat_g(\c{C}_{2k+1};n)\le \sat_g(\c{C}_{2k'+1};n)$ when $k'$ is sufficiently larger than $k$ and $n$ is sufficiently large.  We conjecture that this remains true when $k'=k+1$.

\begin{conj}\label{C-Main}
	For all $k\ge 2$, \[\sat_g(\c{C}_{2k-1};n)\le \sat_g(\c{C}_{2k+1};n)\]
	for $n$ sufficiently large.
\end{conj}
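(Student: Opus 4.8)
Since $\sat_g(\c{F};n)$ is the value of a finite game, proving the conjectured inequality is equivalent to exhibiting a strategy for Mini in the $\c{C}_{2k-1}$-saturation game that always ends with at most $\sat_g(\c{C}_{2k+1};n)$ edges. The plan is a simulation argument. The crucial structural fact is that legal moves nest favourably: any edge that is legal in the $\c{C}_{2k+1}$-game --- meaning it creates no odd cycle of length at most $2k+1$ --- is a fortiori legal in the $\c{C}_{2k-1}$-game. So Mini maintains a ``shadow'' $\c{C}_{2k+1}$-saturation game on the same vertex set: as long as no player has ever created a $C_{2k+1}$, the real position and the shadow position coincide, and Mini simply copies Max's move into the shadow game, consults an optimal Mini strategy there, and plays the resulting edge back in the real game (legal, since it creates no odd cycle of length $\le 2k+1$). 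In this regime the two games run in lockstep and the shadow game ends with at most $\sat_g(\c{C}_{2k+1};n)$ edges.

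Two phenomena force the games apart, and taming them is the substance of the argument. First, Max may create a $C_{2k+1}$ in the real game; this is illegal in the shadow game, so the simulation cannot be continued verbatim. Second, even if no $C_{2k+1}$ is ever made, the $\c{C}_{2k-1}$-game need not stop when the shadow game stops: a $\c{C}_{2k+1}$-saturated position can still admit edges whose only bad cycle (for $\c{C}_{2k+1}$) is a $C_{2k+1}$, so the real game may run into ``overtime'' past the shadow game's terminal position and accumulate further edges --- the wrong direction for us.

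The plan for handling these is to argue, in both cases, that Mini is in fact helped. For the first: a $C_{2k+1}$ is a \emph{short} odd cycle relative to $\c{C}_{2k-1}$, so it forces many vertex pairs close together through the cycle, and the neighbourhood bound of Claim~\ref{Cl-cycneigh} then caps how many edges can ever be incident to that cycle; Mini abandons the simulation and runs a clean-up strategy built from the upper-bound machinery of Section~\ref{S-Up} around the new cycle, ending with no more edges than before. For the second: once the shadow game has reached a $\c{C}_{2k+1}$-saturated position, every edge addable in overtime lies on a $C_{2k+1}$, each such cycle is incident to only $O(n)$ edges by Claim~\ref{Cl-cycneigh}, and Mini can moreover steer the overtime (exactly as in the Endgame analysis of Section~\ref{S-Up}) so that the graph is not pushed toward a balanced complete bipartite graph; so overtime contributes only $o(n^2)$ edges. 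One would unify both reductions with a single potential $\Phi(G^t)$ that tracks the eventual edge count to within $o(n^2)$ during the lockstep phase and only decreases when Max inserts a $C_{2k+1}$ or when overtime begins.

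The main obstacle is that the conjectured inequality is \emph{exact}, whereas the simulation as described naturally yields only $\sat_g(\c{C}_{2k-1};n)\le \sat_g(\c{C}_{2k+1};n)+o(n^2)$, and the conjectured gap between the two quantities is at most of order $n^2/k^2$ by Theorem~\ref{T-gen} --- quite possibly itself $o(n^2)$ --- so a soft comparison cannot close it. Because the two games genuinely diverge, one cannot couple trajectories edge-for-edge and must instead compare equilibrium values, which appears to demand control of \emph{both} optimal strategies to comparable precision; in the absence of matching asymptotics for $\sat_g(\c{C}_{2k+1};n)$ it is conceivable that no monotone coupling exists and the conjecture is essentially as hard as determining the leading constant itself.
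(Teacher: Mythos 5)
The statement you are addressing is Conjecture~\ref{C-Main}: the paper does not prove it. The only evidence it offers is that the case $k=2$ follows from combining the external bound $\sat_g(\{C_3\};n)\le \f{26}{121}n^2+o(n^2)$ of Bir\'o, Horn, and Wildstrom with the lower bound $\sat_g(\c{C}_5;n)\ge\f{6}{25}n^2+o(n^2)$ of Theorem~\ref{T-5} --- that is, by comparing explicit asymptotic estimates for the two quantities, not by any coupling of the two games. So there is no proof in the paper to match yours against, and your proposal does not constitute one either, as you essentially concede in your final paragraph.

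The concrete gaps are these. First, the simulation genuinely breaks the moment Max plays a real-game edge that creates a $C_{2k+1}$, and your proposed recovery (``run a clean-up strategy built from the upper-bound machinery of Section~\ref{S-Up}'') is not available: Propositions~\ref{P-k} and~\ref{P-1} apply only when Mini has been playing Strategy~\ref{SS-Strat} from the very first move, maintaining the path system $P^t$, the sets $D_\ell^t$, and the approximate bipartition $U^t\cup V^t$. An arbitrary optimal shadow strategy gives you none of that structure, and Claim~\ref{Cl-cycneigh} by itself only caps the edges incident to a single cycle at $O(n)$, which says nothing about the other ${n\choose 2}$ potential edges. Second, and fatally even if the coupling were repaired, the argument is intrinsically asymptotic: the clean-up and overtime phases are only argued to cost $o(n^2)$ rather than $0$, whereas the conjectured inequality is exact, and the true difference $\sat_g(\c{C}_{2k+1};n)-\sat_g(\c{C}_{2k-1};n)$ is not known to exceed any positive function of $n$ --- by Theorem~\ref{T-gen} it is at most of order $n^2/k^2$ and could perfectly well be $o(n^2)$, or negative, on the scale at which your error terms live. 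An approximate comparison therefore cannot decide the sign of the difference. You diagnose this obstacle correctly yourself; what is missing is any idea that overcomes it, and the conjecture remains open for all $k\ge 3$.
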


Note that the bound $\sat_g(\{C_3\};n)\le \f{26}{121}n^2+o(n^2)$ of \cite{horn} together with Theorem~\ref{T-5} shows that the conjecture is true for $k=2$, and moreover that $\sat_g(\c{C}_3;n)\le \sat_g(\c{C}_{2k+1};n)$ for all $k\ge 2$ and $n$ sufficiently large.

Theorem~\ref{T-5} shows that $\sat_g(\c{C}_\infty\sm \{C_{2k+1}\};n)$ is quadratic for all $k\ge 3$. Theorem~\ref{T-Mod3} shows that $\sat_g(\c{C}_\infty\sm \{C_{3}\};n)$ is linear.   Given this, it is natural to ask about the order of magnitude of $\sat_g(\c{C}_\infty\sm \{C_{5}\};n)$.

\begin{quest}
	What is the order of magnitude of $\sat_g(\c{C}_\infty \sm \{C_5\};n)$?  In particular, is this value linear, quadratic, or something else?
\end{quest}

\section{Acknowledgments}
The author would like to thank Jacques Verstraete for suggesting this research topic, as well as his assistance with the general structure of the paper.  The author would also like to thank the anonymous referees, whose comments greatly improved the structure and readability of this paper.  This research was partially supported by NSF grant DMS-1800746.

\bibliographystyle{plain}
\bibliography{Hajnal}
\end{document}